	\newcommand{\R}{\mathbb{R}}
	\newcommand{\N}{\mathbb{N}}
	\newcommand{\Q}{\mathbb{Q}}
	\newcommand{\T}{\mathbb{T}}
	\newcommand{\Z}{\mathbb{Z}}
	\newcommand{\C}{\mathbb{C}}
	\newcommand{\D}{\mathbb{D}}
	\newcommand{\ch}{\mathcal{H}}
  \newcommand{\cA}{\mathcal{A}}
  \newcommand{\cU}{\mathcal{U}}
	\newcommand{\cplus}{\mathbb{C}_{+}}
\numberwithin{equation}{section}
\newtheorem{prop}{Proposition}
\newtheorem{thm}[prop]{Theorem}
\newtheorem*{thm*}{Theorem}
\newtheorem*{mythma}{Theorem \ref{mainthm}\ref{hardway}}
\newtheorem*{mythmb}{Theorem \ref{mainthm}\ref{easyway}}
\newtheorem{lemma}{Lemma}
\newtheorem{remark}[prop]{Remark}
\begin{document}

\begin{frontmatter}

\title{Carlson's Theorem for Different Measures}



\author[mymainaddress]{Meredith Sargent\corref{mycorrespondingauthor}}
\cortext[mycorrespondingauthor]{Corresponding author}
\ead{meredithsargent@wustl.edu}

\address[mymainaddress]{Department of Mathematics, Campus Box 1146, Washington University in St Louis, St Louis, MO 63130}

\begin{abstract}
We use an observation of Bohr connecting Dirichlet series in the right half plane $\mathbb{C}_+$ to power series on the polydisk to interpret Carlson's theorem about integrals in the mean as a special case of the ergodic theorem by considering any vertical line in the half plane as an ergodic flow on the polytorus. Of particular interest is the imaginary axis because Carlson's theorem for Lebesgue measure does not hold there. In this note, we construct measures for which Carlson's theorem does hold on the imaginary axis for functions in the Dirichlet series analog of the disk algebra $\mathcal{A}(\mathbb{C}_+)$.
\end{abstract}

\begin{keyword}
Dirichlet Series, Ergodic Theorem
\end{keyword}

\end{frontmatter}

\section{Introduction} 
\label{sec:introduction}

  In 1913, Bohr~\cite{Bohr1913a} observed that one may connect Dirichlet series converging on the right half plane $\cplus = \{s \in \C:\; \Re(s)>0\}$ to power series on the infinite polydisk using the correspondence
    \begin{equation*}
      z_1 = 2^{-s},\,z_2 = 3^{-s},\, \dots\,,\, z_j = p_j^{-s}, \dots
    \end{equation*}
  where $p_j$ denotes the $j$th prime. For a Dirichlet series $f=\sum_{n=1}^\infty a_n n^{-s}$, we can use the fundamental theorem of arithmetic to factor each integer $n$ uniquely and then represent $f$ by a power series $F$ in the variables $\{z_j\}$.  As discussed in~\cite{SaksmanSeip2009}, the Bohr correspondence also allows us to consider any vertical line in $\C$ as an ergodic flow on the infinite-dimensional polytorus \(\T^\infty\):
    \begin{equation*}
      (e^{i\theta_1},\, e^{i\theta_2},\, \dots ) \mapsto (p_1^{-it} e^{i\theta_1},\, p_2^{-it}e^{i\theta_2},\, \dots) \in \T^\infty
    \end{equation*}
  and in particular, the imaginary axis maps to the boundary of the infinite polydisk (of radius one.) 
  We would like to compare a ``space average'' of the power series $F$ on $\T^\infty$ to a ``time average'' of the Dirichlet series $f$ on the ergodic flow described above. For this question, we consider $\ch^\infty$, the Banach space of Dirichlet series of the form 
    \begin{equation} \label{DS}
      f(s) = \sum_{n=1}^\infty a_n n^{-s}
    \end{equation} 
  that converge to bounded analytic functions on $\cplus$.

  A theorem of Carlson~\cite{Carlson1922} tells us about the limit in the mean of a Dirichlet series on an ergodic flow for $\sigma > 0$. 
    \begin{thm}[Carlson's Theorem] \label{thm:carlson} 
      If a Dirichlet series $f(s) = \sum_{n=1}^\infty a_n n^{-s}$ converges in the right half plane $\cplus$ and is bounded in every half plane $\Re(s)\geq\delta$ for $\delta > 0$, then for each $\sigma>0$
      \begin{equation}
        \lim_{T \to \infty} \frac{1}{T} \int_0^T |f(\sigma + it)|^2 dt = \sum_{n=1}^\infty |a_n|^2 n^{-2\sigma}
      \end{equation}
    \end{thm}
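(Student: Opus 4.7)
The plan is a direct Fourier-analytic computation on the line $\Re s = \sigma$. The right-hand side $\sum_n |a_n|^2 n^{-2\sigma}$ is precisely the diagonal contribution to the formal product $f \cdot \overline{f}$, so the strategy is to expand
\begin{equation*}
|f(\sigma+it)|^2 = \sum_{m,n\geq 1} a_m \overline{a_n} (mn)^{-\sigma}(n/m)^{it},
\end{equation*}
interchange the double sum with the Ces\`aro mean $\frac{1}{T}\int_0^T \cdot \, dt$, and apply the elementary identity that $\frac{1}{T}\int_0^T (n/m)^{it}\,dt$ equals $1$ when $m=n$ and is otherwise bounded in modulus by $\frac{2}{T|\log(n/m)|}$. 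The diagonal then produces $\sum |a_n|^2 n^{-2\sigma}$ and each off-diagonal term contributes $O(1/T)$.

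This argument goes through cleanly when $\sigma > 1$, because the Dirichlet series converges absolutely in that range (the abscissa of absolute convergence exceeds that of ordinary convergence by at most one), yielding the majorant $\bigl(\sum |a_n| n^{-\sigma}\bigr)^2$ that is uniform in $t$; all interchanges then follow by dominated convergence. For $0 < \sigma \leq 1$ absolute convergence can fail, and I would reduce to the easy case by truncation. Set $f_N(s) = \sum_{n=1}^N a_n n^{-s}$, apply the previous paragraph to $f_N$ (trivial, since the sum is finite), and then let $N \to \infty$. The hypothesis that $f$ is bounded on each closed right half-plane $\Re s \geq \delta$ is exactly what is needed to invoke Bohr's theorem on uniform convergence of bounded Dirichlet series, which gives $\|f - f_N\|_{L^\infty(\Re s = \sigma)} \to 0$. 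The triangle inequality for the time-average seminorm $\bigl(\frac{1}{T}\int_0^T |\cdot|^2 dt\bigr)^{1/2}$ then transfers the conclusion from $f_N$ to $f$.

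The main obstacle is the interchange of the two limits $T \to \infty$ and $N \to \infty$ in the second regime. The decisive estimate is $\frac{1}{T}\int_0^T |f - f_N|^2 dt \leq \|f - f_N\|_{L^\infty(\Re s = \sigma)}^2$, which is uniform in $T$; this is precisely where the boundedness hypothesis enters through Bohr's theorem, since mere pointwise convergence of the Dirichlet series on the line $\Re s = \sigma$ would not be enough to control the time average uniformly.
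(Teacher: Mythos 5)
The paper does not actually prove this theorem; it cites Carlson's original 1922 paper and uses the statement as a known result, so there is no internal proof to compare against. That said, your argument is correct and is the standard modern proof: compute directly for Dirichlet polynomials via the orthogonality estimate $\bigl|\frac{1}{T}\int_0^T (n/m)^{it}\,dt\bigr| \leq \frac{2}{T|\log(n/m)|}$ for $m \neq n$, then pass to general $f$ by truncation. You correctly identify that the whole matter hinges on Bohr's theorem, which upgrades the boundedness hypothesis to uniform convergence of the partial sums on every closed half-plane $\Re s \geq \sigma > 0$, and hence to the $T$-uniform bound $\frac{1}{T}\int_0^T |f - f_N|^2\,dt \leq \|f - f_N\|_{L^\infty(\Re s = \sigma)}^2$ that licenses the interchange of the limits in $N$ and $T$. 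One point you leave implicit but should state: the finiteness of $\sum_n |a_n|^2 n^{-2\sigma}$ is not given; it follows from the same seminorm triangle inequality, since $\bigl(\sum_{n\leq N}|a_n|^2 n^{-2\sigma}\bigr)^{1/2} = \lim_{T} M_T(f_N) \leq \sup_T M_T(f) + \|f - f_N\|_\infty$ is bounded uniformly in $N$, where $M_T(g) = \bigl(\frac{1}{T}\int_0^T|g|^2\,dt\bigr)^{1/2}$. Also note that your preliminary $\sigma > 1$ case is logically redundant, since the truncation argument already covers all $\sigma > 0$; it is fine as a warm-up but not a necessary stage of the proof.
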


  Saksman and Seip showed in~\cite{SaksmanSeip2009} that Carlson's theorem fails to hold on the imaginary axis when we replace $f(\sigma+it)$ with its non-tangential limit $f(it)$ (which exists for almost every $t$.) 
  \begin{thm}[Saksman-Seip] \label{thm:SS}
    The following two statements hold:
    \begin{itemize}
      \item[(i)] There exists a function $f$ in $\ch^\infty$ such that 
        \begin{equation*}
          \lim_{T\to \infty} \frac{1}{T} \int_0^T |f(it)|^2 dt
        \end{equation*}
        does not exist.
      \item[(ii)] Given $\epsilon>0$, there exists a singular inner function $g = \sum_{n=1}^{\infty}b_n n^{-s}$ in $\ch^\infty$ such that $\sum_{n=1}^{\infty}|b_n|^2 \leq \epsilon$.
    \end{itemize}
  \end{thm}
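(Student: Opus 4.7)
The plan is to prove (ii) first and obtain (i) as a consequence. The underlying tension to exploit is that the arithmetic $\ch^2$-norm $\sum_n|b_n|^2$ equals the polytorus norm $\|G\|_{L^2(\T^\infty)}^2$ via the Bohr correspondence, while the vertical time average is an orbital average along the single Kronecker orbit through $(1,1,\dots)\in\T^\infty$; that orbit has Haar measure zero, so Birkhoff's ergodic theorem permits it to be exceptional.

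For (ii), I would try to realize $g$ as a singular inner function on the polydisk of Herglotz form $g(s) = \exp(-H(s))$ arising from a positive singular measure $\mu$ on $\T^\infty$ carefully placed so that (a)~$\mu$ concentrates enough mass near $(1,1,\dots)$ that the boundary trace of $g$ along the Kronecker orbit through that point remains unimodular for almost every $t$, while (b)~the polytorus $L^2$-norm $\|G\|_{L^2(\T^\infty)}^2=\sum|b_n|^2$ can be driven below any prescribed $\epsilon$. A naive tensor construction $g(s)=\prod_k \phi_k(p_k^{-s})$ with each $\phi_k$ a classical one-variable atomic singular inner factor does not work: each such factor is already unimodular almost everywhere on its own torus, which forces $\|G\|_{L^2(\T^\infty)}=1$. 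The real task is therefore to mix the variables of $\T^\infty$ through $\mu$ so that the generic orbit on $\T^\infty$ does \emph{not} see the unimodularity that the special orbit through $(1,1,\dots)$ does, and this is where I expect essentially all of the difficulty to sit.

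For (i), once (ii) is available with $\epsilon<1$, one already has $\lim_T \frac{1}{T}\int_0^T|g(it)|^2 dt = 1 > \epsilon \geq \sum|b_n|^2$, which kills a Carlson-type identity on the imaginary axis — but for $g$ itself the limit does exist. To manufacture genuine non-existence, I would form a lacunary sum of vertical translates
\begin{equation*}
  f(s) = \sum_{k=1}^\infty c_k\, g(s - i\tau_k),
\end{equation*}
with $c_k>0$ summable (so that $f\in\ch^\infty$) and $\tau_k$ growing fast enough that successive translates remain well-separated in time. With $T$ taken at the scales $\tau_k$, the partial time average can be engineered to accumulate at different values along different subsequences of $T$, giving the required non-convergence. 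A soft alternative, if the windowing argument resists, is a Banach--Steinhaus-flavored argument on the quadratic forms $L_T(f)=\frac{1}{T}\int_0^T|f(it)|^2 dt$: pointwise convergence of $L_T$ on all of $\ch^\infty$ would, after polarization and the easy evaluation $\sum a_n\overline{c_n}$ on Dirichlet polynomials plus a continuity step, force the limit to equal $\sum|a_n|^2$, contradicting (ii).
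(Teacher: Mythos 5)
The paper does not prove this theorem; it is quoted from Saksman and Seip \cite{SaksmanSeip2009} and used as a black box to motivate restricting attention to $\cA(\cplus)$. There is therefore no internal proof to compare against, and I can only assess your sketch on its own terms.

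For (ii), you correctly diagnose the central tension (the Kronecker orbit through $(1,1,\dots)$ has Haar measure zero, so boundary behavior along it can be exceptional) and correctly rule out the naive tensor product of one-variable singular inner factors. But this is a diagnosis, not a construction: you explicitly leave open the step of producing a $G$ with $\|G\|_{L^2(\T^\infty)}\leq\epsilon$ whose trace along the Kronecker flow is unimodular a.e., and that step \emph{is} the content of the theorem. For (i), the lacunary-translate idea is not worked out, and the Banach--Steinhaus fallback has a concrete gap. After polarization and the elementary computation $\lim_{T}\frac{1}{T}\int_0^T f(it)\overline{h(it)}\,dt=\sum a_n\overline{c_n}$ on Dirichlet polynomials, the ``continuity step'' identifies the limit quadratic form with $\sum|a_n|^2$ only on the uniform closure of the Dirichlet polynomials, which is $\cA(\cplus)$ (Theorem~\ref{aron}), not all of $\ch^\infty$. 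But a singular inner $g$ with $\sum|b_n|^2<1$ cannot lie in $\cA(\cplus)$: otherwise its Bohr lift $G$ would be continuous on $\T^\infty$ and unimodular on the dense Kronecker orbit, hence unimodular everywhere, forcing $\sum|b_n|^2=\|G\|_{L^2(\T^\infty)}^2=1$. So the density argument never reaches $g$; from the assumed pointwise convergence you only get $L(g)=1$ (which is true and unproblematic), not $L(g)=\sum|b_n|^2$, and no contradiction is produced. Deriving (i) this way would require a genuinely new ingredient.
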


  What this result tells us is that there can be no direct analog of Carlson's Theorem on the boundary: the limit need not exist and equality need not hold, at least not for Lebesgue measure and for all functions in $\ch^\infty$. However, by looking at a smaller space, we can prove an analog of Carlson's theorem.

  The space we consider is $\cA(\cplus)$, the set of Dirichlet series which are convergent on $\cplus$ and define uniformly continuous functions there. In~\cite{Aron2016}, Aron, Bayart, Gauthier, Maestre, and Nestoridis show that $\cA(\cplus)$ is a closed subspace of $\ch^\infty$ and prove that it consists exactly of the uniform limits of Dirichlet polynomials: 
    \begin{thm} \label{aron}
      Given $f: \cplus \to \C$ the following are equivalent.
      \begin{enumerate}[(i)]
        \item $f$ is the uniform limit on $\cplus$ of a sequence of Dirichlet polynomials.
        \item $f$ is represented by a Dirichlet series pointwise on $\cplus$ and $f$ is uniformly continuous on $\cplus$.
       \end{enumerate}
    \end{thm}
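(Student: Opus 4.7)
The equivalence splits into two directions, and my plan is to lean on Bohr's theorem on the abscissa of uniform convergence of Dirichlet series representing functions in $\ch^\infty$ in order to pass freely between pointwise series representations and uniform approximation by partial sums. The forward direction (i)$\Rightarrow$(ii) will be a matter of transferring regularity across a uniform limit, while the reverse (ii)$\Rightarrow$(i) will exploit the extra room gained by horizontally translating $f$ into $\cplus$.

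For (i)$\Rightarrow$(ii), suppose $f$ is the uniform limit on $\cplus$ of Dirichlet polynomials $P_n=\sum_{k=1}^{N_n}a_k^{(n)}k^{-s}$. I would first observe that each $P_n$ is bounded on $\cplus$ by $\sum_k|a_k^{(n)}|$ and uniformly continuous there, since its derivative is bounded by $\sum_k|a_k^{(n)}|\log k$. Uniform convergence transfers both properties to $f$, placing $f$ in $\ch^\infty$. Applying Bohr's theorem to $f\in\ch^\infty$ then produces a Dirichlet series representation of $f$ that converges uniformly on every closed half-plane $\Re s\geq\delta>0$, hence pointwise on all of $\cplus$, giving (ii).

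For (ii)$\Rightarrow$(i), the first step is to check that $f\in\ch^\infty$: because $f(s)\to a_1$ as $\Re s\to+\infty$, $f$ is bounded on some half-plane $\Re s\geq R$, and then uniform continuity propagates the bound by a chain of $O(R/\delta_\epsilon)$ steps to all of $\cplus$. With $f\in\ch^\infty$ secured, I would fix $\epsilon>0$, use uniform continuity to choose $\eta>0$ with $|f(s+\eta)-f(s)|<\epsilon/2$ for every $s\in\cplus$, and invoke Bohr's theorem to find $N$ such that the partial sum $S_N$ of the Dirichlet series of $f$ satisfies $|f(w)-S_N(w)|<\epsilon/2$ uniformly on $\Re w\geq\eta$. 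Taking $w=s+\eta$ and noting that
\[
S_N(s+\eta)=\sum_{n=1}^{N}\bigl(a_n n^{-\eta}\bigr)n^{-s}
\]
is itself a Dirichlet polynomial in $s$, the triangle inequality produces the desired uniform approximation of $f$ by a Dirichlet polynomial on $\cplus$.

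The hard part will be the verification that $\cA(\cplus)\subset\ch^\infty$, because uniform continuity on an unbounded set need not give boundedness in general (consider $f(s)=s$); it is only the Dirichlet structure, specifically the asymptotic $f\to a_1$ at $+\infty$, together with the chain-step argument above, that closes this gap. Once that containment is in hand, both directions hinge on the same substantive input, Bohr's theorem on the abscissa of uniform convergence, and the rest of the proof is essentially bookkeeping with the triangle inequality.
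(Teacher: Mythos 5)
The paper does not actually prove Theorem~\ref{aron}; it is cited as a known result of Aron, Bayart, Gauthier, Maestre and Nestoridis, so there is no in-paper argument to compare against. On its own terms, your proposal is essentially correct and is, as far as I can tell, the same strategy used in the cited source: identify Bohr's theorem (that $\sigma_u\leq 0$ for $\ch^\infty$) as the key input, and for (ii)$\Rightarrow$(i) use the shifted partial sums $S_N(\cdot+\eta)$ together with uniform continuity to absorb the horizontal translation. That trick is exactly right, and the chain-step argument that uniform continuity plus boundedness on a far half-plane gives boundedness on all of $\cplus$ is also correct.

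Two places deserve tightening. In (i)$\Rightarrow$(ii) you say that transferring boundedness and uniform continuity to $f$ ``places $f$ in $\ch^\infty$,'' but membership in $\ch^\infty$ also requires $f$ to actually \emph{be} a Dirichlet series; boundedness and uniform continuity alone do not give that. What you need is the coefficient estimate $|a_k|\leq\|P\|_{\ch^\infty}$ for Dirichlet polynomials, which shows that the coefficients of $P_n$ converge, together with the completeness of $\ch^\infty$ (so that the uniform limit of Dirichlet polynomials is again a Dirichlet series in $\ch^\infty$); only then does Bohr's theorem give the pointwise representation on $\cplus$. In (ii)$\Rightarrow$(i), the assertion ``$f(s)\to a_1$ as $\Re s\to+\infty$, hence $f$ is bounded on some half-plane $\Re s\geq R$'' needs uniformity in the imaginary direction, which pointwise convergence of the series does not immediately supply. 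The clean justification is the classical inequality $\sigma_a\leq\sigma_c+1$: since the series converges throughout $\cplus$, it converges absolutely for $\Re s>1$, giving $|f(s)|\leq\sum|a_n|n^{-2}$ on $\Re s\geq 2$, after which your chain-step argument closes the gap. With those two points made explicit the proof is complete.
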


  We now come to the main result:
  \begin{thm} \label{mainthm}
    For a Dirichlet series $f(s)=\sum_{n=1}^\infty a_n n^{-s}$, let $F(z)$ be the corresponding power series on $\T^\infty$ under the Bohr lift.
    \begin{enumerate}[(i)]
      \item\label{hardway} Let \(\mu\) be a Borel probability measure on the infinite torus \(\T^{\infty}\). There exists a locally finite Borel measure \(\lambda\) on $\R$, such that, for all $f \in \cA(\cplus)$ 
      \begin{equation} \label{bigequality}
        \lim_{T \to \infty} \frac{1}{\lambda([0,T])} \int_{0}^{T}\left|f(it)\right|^2 d\lambda(t)
        = \int_{\T^{\infty}} \left| F(z) \right|^2 d\mu(z).
      \end{equation} 
      \item\label{easyway} Let $\lambda$ be a locally finite Borel measure on $\R$ such that the limit on the left hand side of (\ref{bigequality}) exists and is finite for all $f \in \cA(\cplus)$. Then there exists a unique Borel probability measure $\mu$ on the infinite torus \(\T^{\infty}\) such that, for all $f \in \cA(\cplus)$, (\ref{bigequality}) holds.
    \end{enumerate}
  \end{thm}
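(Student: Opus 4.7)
The plan is to recast (\ref{bigequality}) as weak-$*$ convergence of a family of probability measures on $\T^{\infty}$. Let $\phi : \R \to \T^\infty$ be the Bohr flow $\phi(t) = (p_1^{-it}, p_2^{-it}, \ldots)$ and, for $T>0$ with $\lambda([0,T])>0$, set
\[
  \nu_T := \frac{1}{\lambda([0,T])}\,\phi_{*}\bigl(\lambda|_{[0,T]}\bigr) \in \mathcal{P}(\T^\infty).
\]
Since any $f\in\cA(\cplus)$ has a Bohr lift $F$ that extends continuously to $\overline{\D^\infty}$ by Theorem \ref{aron}, and $f(it)=F(\phi(t))$, the left-hand side of (\ref{bigequality}) equals $\int_{\T^\infty}|F|^2\,d\nu_T$. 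Both directions of the theorem therefore reduce to questions about the weak-$*$ behaviour of the net $\{\nu_T\}$ tested against continuous functions on $\T^\infty$.

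For \ref{easyway}, I would first apply the polarization identity $4F\bar G = |F+G|^2 - |F-G|^2 + i|F+iG|^2 - i|F-iG|^2$, which together with the hypothesis upgrades convergence to $\int F\bar G\,d\nu_T$ for every pair of Bohr lifts $F, G$ of $\cA(\cplus)$ functions. The linear span of $\{F\bar G\}$ is a $*$-subalgebra of $C(\T^\infty)$: it is closed under multiplication via $(F_1\bar G_1)(F_2\bar G_2)=(F_1F_2)\overline{G_1G_2}$, contains the constants, and separates points because each coordinate $z_j$ is the Bohr lift of $p_j^{-s}$. By Stone--Weierstrass this algebra is uniformly dense in $C(\T^\infty)$. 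Since $\T^\infty$ is compact metric, $\{\nu_T\}$ is weak-$*$ precompact; any two subsequential limits must agree on the dense subspace above, so they coincide. Consequently $\nu_T$ converges weak-$*$ to a unique Borel probability measure $\mu$, which immediately yields (\ref{bigequality}) and its uniqueness.

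For \ref{hardway}, conversely, given $\mu$ I would construct $\lambda$ so that $\nu_T \to \mu$ weak-$*$. Kronecker's theorem together with the $\Q$-linear independence of $\{\log p_j\}$ implies that $\phi([M,\infty))$ is dense in $\T^\infty$ for every $M\geq 0$. Since finitely-supported convex combinations of Dirac masses are weak-$*$ dense in $\mathcal{P}(\T^\infty)$, I can approximate $\mu$ in weak-$*$ by measures of the form $\mu_n = \sum_{j=1}^{J_n} c_{n,j}\,\delta_{\phi(t_{n,j})}$ with each ``block'' of times $\{t_{n,j}\}_j$ chosen strictly to the right of all preceding blocks. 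Setting
\[
  \lambda := \sum_n \sum_j c_{n,j}\,\delta_{t_{n,j}}
\]
gives a locally finite measure on $\R$ for which, at the right endpoint of the $n$th block, $\nu_T = \frac{1}{n}\sum_{k\leq n}\mu_k$; this tends weak-$*$ to $\mu$ by Cesaro stability of weak-$*$ convergence. For intermediate $T$, $\nu_T$ is a convex combination of this Cesaro average with a normalised partial-block measure of relative weight at most $1/(n+1)$, so weak-$*$ convergence persists along the continuous parameter $T\to\infty$.

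The main obstacle is the construction in \ref{hardway}: one must simultaneously (a) approximate $\mu$ by convex combinations of Dirac masses supported in the orbit $\phi(\R)$, (b) arrange the supports in disjoint blocks marching off to $+\infty$ in $\R$ so that the result defines a single locally finite measure, and (c) verify weak-$*$ convergence of $\nu_T$ for every $T\to\infty$ rather than merely along the discrete block endpoints. Each ingredient is elementary in isolation, but the block-and-partial-block bookkeeping is what produces convergence along the continuous parameter. Part \ref{easyway} is comparatively routine once the polarization plus Stone--Weierstrass reduction is in place.
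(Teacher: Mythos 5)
Your proposal is correct and takes a genuinely different, and in part \ref{hardway} a noticeably more economical, route than the paper. The paper's proof of \ref{hardway} proceeds in two stages: Lemma \ref{lincombo} handles a single finitely-supported $\mu$, with $\lambda$ built from Kronecker point masses that are repeated $2^k\|\lambda_{k-1}\|$ times at accuracy level $k$ so that the mass of later, better-approximating blocks swamps the earlier ones; this is then followed by a delicate stitching step (Lemma \ref{Tepsilon} and the main argument) to pass to general $\mu$. Your reformulation via the pushforward measures $\nu_T = \lambda([0,T])^{-1}\phi_*(\lambda|_{[0,T]})$ collapses both stages: because $\phi([M,\infty))$ is dense for every $M$, Lemma \ref{dense} already lets you choose the approximating finitely-supported measures $\mu_n$ with supports inside the orbit and with the underlying times marching to $+\infty$ in disjoint blocks, so that $\nu_T$ at block endpoints is \emph{exactly} the Cesàro average $\frac{1}{n}\sum_{k\le n}\mu_k$. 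Since Cesàro averages of a weak-$*$ convergent sequence of probability measures converge to the same limit, you get $\nu_T\to\mu$ with equal-mass blocks and no exponential bookkeeping; the partial-block estimate $\lambda([0,T])\ge n-1$ with leftover mass $\le 1$ handles intermediate $T$. What the paper's heavier construction buys is an explicit $\lambda$ written as a limit of recursively defined measures $\lambda^{(k)}$ together with the quantitative rate $\lambda[0,T_{k+1}] = (2^k+1)\lambda[0,T_k]$; your version buys brevity and conceptual clarity at the cost of suppressing the Kronecker bookkeeping (choosing, for each block $n$, a dimension $d_n$ and accuracy $\epsilon_n$ so that the first $n$ test functions are controlled). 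For \ref{easyway} the two arguments are close in spirit---both invoke Stone--Weierstrass on $\T^\infty$---but differ in mechanics: the paper works over $\R$ with the algebra $\cU=\{\sum a_n|F_n|^2\}$ (closure under products via $|F|^2|G|^2=|FG|^2$) and applies Riesz representation directly, while you polarize to reach the self-adjoint algebra spanned by $\{F\bar G\}$ and conclude via weak-$*$ compactness of $\mathcal{P}(\T^\infty)$ and uniqueness of subsequential limits; both identify $\phi(|F|^2)=\lim_T\int|F|^2d\nu_T$ as the bounded functional, and your $\nu_T$ framing makes the boundedness $|\phi(h)|\le\|h\|_\infty$ transparent, a point the paper states but does not spell out.
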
 

  Part \ref{easyway} follows from the Riesz representation theorem and will be shown in Section \ref{secconverse}.
  To prove Theorem \ref{mainthm}\ref{hardway}, it is helpful to consider the following useful lemma which allows us to first consider linear combinations of point masses and construct corresponding measures $\lambda$ on $\R$, and then use that result to construct $\lambda$ for Borel probability measures $\mu$.
    \begin{lemma}[\cite{parthaProb} pp. 44-46] \label{dense}
      Let $X$ be a compact separable metric space with a dense subset $E$ and let $U(X)$ be the space of Borel probability measures on $X$. Then 
      \begin{enumerate}[(i)]
        \item The set of all measures whose supports are finite subsets of $E$ is dense in $U(X)$, with the weak-$\ast$ topology, and
        \item $U(X)$ is a compact metric space.
      \end{enumerate}
    \end{lemma}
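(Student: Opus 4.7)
The plan is to prove both parts by standard techniques from functional analysis and measure theory, with the Riesz representation theorem and the Banach--Alaoglu theorem doing most of the heavy lifting in part (ii), and a direct partition-and-approximate argument handling part (i).

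For part (ii), I would identify $U(X)$ with a subset of the dual space $C(X)^{\ast}$ via the Riesz representation theorem, noting that every Borel probability measure corresponds to a positive linear functional of norm one. Since $X$ is compact and metric, $C(X)$ is separable, so the closed unit ball of $C(X)^{\ast}$ is weak-$\ast$ compact (by Banach--Alaoglu) and metrizable in the weak-$\ast$ topology. It then suffices to check that $U(X)$ is weak-$\ast$ closed inside this ball, which follows because membership in $U(X)$ is characterized by the closed conditions $\mu(\mathbf{1}) = 1$ and $\mu(f) \geq 0$ for all nonnegative $f \in C(X)$.

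For part (i), given $\mu \in U(X)$ and a basic weak-$\ast$ neighborhood determined by $f_{1}, \dots, f_{n} \in C(X)$ and $\varepsilon > 0$, I would construct a finitely supported probability measure on $E$ inside this neighborhood. The key steps are: first, use uniform continuity of each $f_{i}$ on the compact space $X$ to obtain $\delta > 0$ with $|f_{i}(x) - f_{i}(y)| < \varepsilon$ whenever $d(x,y) < \delta$; second, use density of $E$ together with compactness of $X$ to cover $X$ by finitely many open balls of radius $\delta/2$ centered at points of $E$; third, refine this cover into a disjoint Borel partition $\{B_{j}\}$ with each $B_{j}$ contained in such a ball around some chosen point $x_{j} \in E$; finally, define the candidate measure
\begin{equation*}
\nu = \sum_{j} \mu(B_{j}) \, \delta_{x_{j}},
\end{equation*}
which is a Borel probability measure with finite support in $E$. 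A direct estimate using uniform continuity then gives $|\mu(f_{i}) - \nu(f_{i})| < \varepsilon$ for every $i$.

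The main technical obstacle is the partition step: one must arrange that each nonempty block of the partition is within $\delta$ of some point of $E$, so that a uniformly small error is incurred when replacing $\mu|_{B_{j}}$ by the point mass $\mu(B_{j}) \delta_{x_{j}}$. This is precisely where the density of $E$ is essential, and the cleanest route is to assemble the partition by disjointifying a finite open cover whose centers already lie in $E$. Once this is in place, the remainder of the argument is a routine combination of uniform continuity and finite additivity, and the fact that the weak-$\ast$ topology on $U(X)$ is determined by finitely many continuous test functions at a time.
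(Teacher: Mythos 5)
Your proof is correct, and since the paper does not prove this lemma itself but only cites it to Parthasarathy's \emph{Probability Measures on Metric Spaces}, the relevant comparison is to the cited source, which argues in essentially the same way: the Riesz/Banach--Alaoglu identification of $U(X)$ with a weak-$\ast$ closed, metrizable subset of the unit ball of $C(X)^{\ast}$ for part (ii), and the cover-by-small-balls-centered-in-$E$, disjointify, and push mass to centers argument for part (i). One minor bookkeeping point: your final estimate naturally yields $|\mu(f_i) - \nu(f_i)| \leq \varepsilon$ rather than a strict inequality, so you should either start the argument with $\varepsilon/2$ or observe that basic weak-$\ast$ neighborhoods can equivalently be taken with non-strict inequalities; this does not affect the substance of the proof.
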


\section{Construction of $\lambda$ for the Point Mass Case}  
    Before we construct $\lambda$, it is helpful to recall Kronecker's theorem:
    \begin{lemma}[Kronecker's Theorem]
      Let \( \phi_1, \dotsc , \phi_k \in \R\) be linearly independent over \(\Q\) and let \(\gamma_1, \dotsc , \gamma_k \in \R\) and \(T,\epsilon >0\) be given. Then there exists \(t>T\) and \(q_1, \dotsc , q_k \in \Z\) such that \[|t \phi_j - \gamma_j -q_j|<\epsilon, \, 1\leq j \leq k.\]
    \end{lemma}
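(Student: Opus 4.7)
The plan is to deduce Kronecker's theorem from the density in $\T^k$ of the one-parameter orbit
\[
  \Gamma := \bigl\{(t\phi_1,\ldots,t\phi_k) \bmod 1 : t \in \R\bigr\}.
\]
The quantitative statement in the lemma is exactly that, given any target $(\gamma_1,\ldots,\gamma_k)$ and any $\epsilon>0$, one can find $t$ with image in the $\epsilon$-neighborhood (in the quotient metric on $\T^k = \R^k/\Z^k$) of $(\gamma_1,\ldots,\gamma_k) \bmod 1$; such a $t$ produces integers $q_j$ with $|t\phi_j - \gamma_j - q_j|<\epsilon$. The restriction $t>T$ is harmless: the portion of $\Gamma$ coming from $t>T$ is merely the translate of the portion from $t>0$ by the fixed vector $(T\phi_1,\ldots,T\phi_k) \bmod 1$, so it is dense whenever $\Gamma$ is.

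To prove density of $\Gamma$, I would use continuous Weyl equidistribution. Let $g$ be continuous on $\T^k$; Stone--Weierstrass reduces the identity
\[
  \lim_{T\to\infty}\frac{1}{T}\int_0^T g(t\phi_1,\ldots,t\phi_k)\,dt
  = \int_{\T^k} g\,dm
\]
to the case of characters $g(x) = \exp(2\pi i\langle n,x\rangle)$ for $n \in \Z^k$. For $n=0$ both sides equal $1$. For $n \neq 0$, $\Q$-linear independence of $\phi_1,\ldots,\phi_k$ (which implies $\Z$-linear independence) forces $\alpha := n_1\phi_1 + \cdots + n_k\phi_k \neq 0$, and a direct computation gives
\[
  \left|\frac{1}{T}\int_0^T e^{2\pi i t\alpha}\,dt\right| \leq \frac{1}{\pi T|\alpha|} \longrightarrow 0,
\]
matching $\int_{\T^k} g\,dm = 0$. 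Equidistribution implies that the orbit visits every nonempty open subset of $\T^k$, which is density.

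An alternative, more structural, route is via Pontryagin duality: the closure $H$ of $\Gamma$ is a closed subgroup of $\T^k$, and $H = \T^k$ iff the only character of $\T^k$ that annihilates $H$ is trivial. Since every character restricted to the one-parameter subgroup has the form $t \mapsto e^{2\pi i t\alpha}$ with $\alpha = \sum n_j\phi_j$, annihilation on $H$ forces $\alpha = 0$, and $\Q$-independence again forces $n = 0$.

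The main obstacle is essentially bookkeeping: converting the abstract density on $\T^k$ into the literal inequalities with integer $q_j$. This is straightforward once one fixes representatives, so I expect the substance of the argument to lie entirely in the character/equidistribution step, which itself reduces to the single elementary estimate $\bigl|\int_0^T e^{2\pi it\alpha}dt\bigr| \leq 1/(\pi|\alpha|)$ enabled by $\alpha\neq 0$.
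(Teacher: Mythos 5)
The paper does not actually prove Kronecker's theorem: it is stated as a classical lemma and used off the shelf, so there is no proof of record to compare your attempt against. That said, your argument is correct and is one of the standard proofs. You reduce the quantitative inequalities to density of the linear flow $t \mapsto (t\phi_1,\dots,t\phi_k) \bmod 1$ in $\T^k$, correctly observe that the restriction $t>T$ is harmless (the $t>T$ tail is a translate of the full orbit, hence dense whenever the orbit is), and establish density via Weyl's criterion: $\Q$-linear independence ensures $\alpha = \sum_j n_j\phi_j \neq 0$ whenever $n\in\Z^k\setminus\{0\}$, and then $\bigl|\tfrac{1}{T}\int_0^T e^{2\pi i t\alpha}\,dt\bigr|\le \tfrac{1}{\pi T|\alpha|}\to 0$. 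The Pontryagin-duality alternative you sketch (the annihilator of the orbit closure is trivial) is an equally valid route. One point worth noting, since it is a frequent source of confusion: you are, correctly, proving the \emph{continuous} form of Kronecker's theorem, which requires only $\Q$-independence of $\phi_1,\dots,\phi_k$ themselves; the discrete form (with $t\in\Z$) would instead require $1,\phi_1,\dots,\phi_k$ to be $\Q$-independent, and your character computation makes this distinction transparent. So the proposal is sound; it simply supplies a proof the paper chose to omit.
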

    Kronecker's theorem will allow us to form $\lambda$ by placing point masses on $\R$ in a strategic way.
    \begin{lemma}[Linear combination of point masses]\label{lincombo}
      For every measure on $\T^{\infty}$ of the form $d\mu = \sum_{j=1}^N c_j \delta_{\omega_j}$ with $\sum_{j=1}^N c_j = 1$, there exists an infinite measure on $\R$, $\lambda$, such that
      \begin{equation}\label{nugood}
        \int_{\T^{\infty}}|F(z)|^2d\mu = \lim_{T \to \infty} \frac{1}{\lambda([0,T])} \int_{0}^{T} |f(it)|^2 d\lambda
      \end{equation}
      for Dirichlet polynomials $f$.
    \end{lemma}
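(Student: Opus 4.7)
The plan is to build $\lambda$ as a countable sum of point masses on $\R$ arranged so that the Bohr orbit $t \mapsto (p_1^{-it}, p_2^{-it}, \ldots)$ visits neighborhoods of each $\omega_j$ with relative frequency $c_j$. The key tool is Kronecker's theorem applied to the sequence $\log p_n / (2\pi)$, which is $\Q$-linearly independent by unique factorization; this allows us to find arbitrarily large real times $t$ at which $(p_1^{-it}, \ldots, p_k^{-it})$ lies arbitrarily close to any prescribed point of $\T^k$.

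Write $\omega_j = (e^{i\theta_{j,1}}, e^{i\theta_{j,2}}, \ldots)$ for $j=1,\ldots,N$. Setting $t_{N,0} := 0$, I construct the times inductively on $k$: having fixed the previous $k-1$ blocks, I apply Kronecker's theorem $N$ times in succession to choose
\[
 t_{N,k-1}+1 < t_{1,k} < t_{2,k} < \cdots < t_{N,k}
\]
with $|p_n^{-it_{j,k}} - e^{i\theta_{j,n}}| < 1/k$ for all $n \leq k$ and all $j \leq N$. Define
\[
 \lambda := \sum_{k=1}^\infty \sum_{j=1}^N c_j\, \delta_{t_{j,k}}.
\]
This $\lambda$ is locally finite, and $\lambda([0, t_{N,k}]) = k$ since each block carries total mass $\sum_j c_j = 1$.

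For the verification, fix a Dirichlet polynomial $f(s) = \sum_{n=1}^M a_n n^{-s}$; its Bohr lift $F$ is a genuine polynomial in the finitely many variables $z_1, \ldots, z_K$ indexed by the primes at most $M$, and hence is Lipschitz on $\T^\infty$. Once $k \geq K$, the Kronecker estimate gives $|f(it_{j,k})|^2 \to |F(\omega_j)|^2$ as $k \to \infty$, uniformly in $j \in \{1,\ldots,N\}$. Writing $\sigma_k := \sum_{j=1}^N c_j |f(it_{j,k})|^2$, we therefore have $\sigma_k \to \int_{\T^\infty}|F|^2\, d\mu$, and a Cesàro average yields
\[
 \frac{1}{\lambda([0, t_{N,k}])} \int_{0}^{t_{N,k}} |f(it)|^2\, d\lambda(t) = \frac{1}{k}\sum_{l=1}^k \sigma_l \longrightarrow \int_{\T^\infty} |F(z)|^2\, d\mu(z).
\]
For arbitrary $T$, I sandwich between consecutive block endpoints $t_{N,k} \leq T < t_{N,k+1}$: each block adds mass exactly $1$ and $f$ is bounded on $i\R$, so the partial-block discrepancy in both numerator and denominator is $O(1)$, negligible against the growing normalization $k$. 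The only technical point is ensuring, at each inductive step, that Kronecker's theorem can simultaneously enforce the tolerance $1/k$ on $k$ coordinates while placing each $t_{j,k}$ beyond the previous time plus $1$; this is exactly what the theorem guarantees, so the construction goes through.
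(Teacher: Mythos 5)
Your proof is correct, and it takes a genuinely different route from the paper's. Both you and the paper use Kronecker's theorem to place point masses $t_{j,k}$ whose Bohr orbit approximates $\omega_j$ to within tolerance shrinking with $k$ in the first $k$ coordinates. The difference is in how the early (inaccurate) stages are made negligible. The paper places $2^k\cdot\|\lambda_{k-1}\|$ representatives of each $\omega_j$ at stage $k$, so that $\|\lambda_k\|=(2^k+1)\|\lambda_{k-1}\|$ grows super-exponentially and stage $k$ carries almost all of the mass accumulated so far; the early stages are then suppressed by having vanishing \emph{relative} mass, at the cost of a case analysis on how $T$ falls among the blocks. You instead place exactly one representative of each $\omega_j$ per stage, so $\lambda([0,t_{N,k}])=k$, and observe that $\sigma_k=\sum_j c_j|f(it_{j,k})|^2\to\int|F|^2\,d\mu$ because a Dirichlet polynomial depends on only finitely many primes and its Bohr lift is uniformly continuous; the early inaccurate $\sigma_l$ are simply finitely many and washed out by the Ces\`aro mean $\frac{1}{k}\sum_{l\le k}\sigma_l$. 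Your sandwiching argument for $T$ between block endpoints is also correct: the extra partial block contributes $O(\|f\|_\infty^2)$ to the numerator and $O(1)$ to the denominator, both negligible against the base quantity of order $k$. The net effect is a cleaner and shorter verification that avoids the paper's three-case analysis on $|X_j|$. One thing your construction does not directly give is the mass concentration on late intervals that the paper exploits again in Lemma~\ref{Tepsilon} and in the outer stage of the proof of Theorem~\ref{mainthm}\ref{hardway}; for the present lemma, however, the Ces\`aro argument is fully sufficient.
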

      \begin{proof}
        Let $F(z) = \sum a_{\alpha} z^{\alpha}$ be a polynomial on $\T^\infty$ and let $f(it)= \sum a_\alpha (p_1^{\alpha_1}\cdots p_d^{\alpha_d})^{-it}$ be the corresponding Dirichlet polynomial. Note that because these have finitely many terms, there is some $d\in \N$ such that every $\alpha$ that appears is of the form $\alpha = (\alpha_1, \alpha_2, \dots, \alpha_d, 0, \dots)$.

        We will construct $\lambda$ to be a sum of point masses $t\in\R$, using Kronecker's theorem to place them so that their images under the Bohr lift $z\in \T^\infty$ approximate the point masses that make up $\mu$. In particular, we would like the images $z$ to fall within $\delta$-balls of $\T^\infty$ where given $\epsilon>0$, $\delta$ is chosen to be small enough such that if $|\omega-z|<\delta$, then
        \begin{equation} \label{continuitything}
          \left| |F(w)|^2-|f(it)|^2 \right| = \left| |F(w)|^2-|F(z)|^2 \right|< \epsilon.
        \end{equation}
        The first equality is because of the Bohr lift, and then we use the continuity of $F$.

        Let us examine $|F(w)|^2$ and $|f(it)|^2$: 
          \begin{align} \label{bigF}
            |F(\omega)|^2 &= \left| \sum a_{\alpha} \omega^\alpha \right|^2 \nonumber\\
                        &= \sum \left| a_{\alpha} \right|^2 \left|\omega^\alpha\right|^2 
                          + \sum_\alpha\sum_{\beta \neq \alpha} a_\alpha \overline{a_\beta} \omega^\alpha \overline{\omega^\beta}\nonumber\\
                        &= \sum \left| a_{\alpha} \right|^2  
                          + \sum_\alpha\sum_{\beta \neq \alpha} a_\alpha \overline{a_\beta} \omega^\alpha \overline{\omega^\beta}.
          \end{align}
        Now, expanding $|f(it)|^2$,
          \begin{align} \label{littlef}
            |f(it)|^2 &= \left| \sum a_\alpha (p_1^{\alpha_1}\cdots p_d^{\alpha_d})^{-it}\right|^2 \nonumber\\
                            &= \sum \left| a_{\alpha} \right|^2 
                              + \sum_\alpha\sum_{\beta \neq \alpha} a_\alpha \overline{a_\beta} (p_1^{\alpha_1}\cdots p_d^{\alpha_d})^{-it} \overline{(p_1^{\beta_1}\cdots p_d^{\beta_d})^{-it}} \nonumber\\
                            &= \sum \left| a_{\alpha} \right|^2  
                              + \sum_\alpha\sum_{\beta \neq \alpha} a_\alpha \overline{a_\beta} (p_1^{\alpha_1-\beta_1}\cdots p_d^{\alpha_d-\beta_d})^{-it}.
          \end{align}
        So we want to place point masses $t$ so that $(p_1^{\alpha_1-\beta_1}\cdots p_d^{\alpha_d-\beta_d})^{-it}$ is near $\omega^\alpha \overline{\omega^\beta}$ for all $\alpha$, $\beta$. Examine both sides. Since $\omega \in \T^{\infty}$, $\omega^\alpha = \omega_1^{\alpha_1} \omega_2^{\alpha_2} \cdots \omega_d^{\alpha_d}$ and there are $\theta_1,\, \theta_2, \dots , \theta_d$ so that 
          \begin{align*}
            \omega^\alpha \overline{\omega^\beta} 
                &= \omega_1^{\alpha_1} \omega_2^{\alpha_2} \dotsm \omega_d^{\alpha_d} \overline{\omega_1^{\beta_1} \omega_2^{\beta_2} \dotsm \omega_d^{\beta_d}}\\
                &= e^{i\theta_1 \alpha_1} e^{i\theta_2 \alpha_2} \dotsm e^{i\theta_d \alpha_d} e^{-i\theta_1 \beta_1} e^{-i\theta_2 \beta_2} \dotsm e^{-i\theta_d \beta_d}\\
                &= e^{i\theta_1 (\alpha_1-\beta_1)} e^{i\theta_2 (\alpha_2-\beta_2)} \dotsm e^{i\theta_d (\alpha_d-\beta_d)}.
            \intertext{On the other side, we have}
            \left(p_1^{\alpha_1-\beta_1}\dotsm p_d^{\alpha_d-\beta_d}\right)^{-it} 
              &= e^{-it \log \left(p_1^{\alpha_1-\beta_1}\dotsm p_d^{\alpha_d-\beta_d}\right)}\\
              &= e^{-i(\alpha_1-\beta_1)t\log(p_1)} \dotsm e^{-i(\alpha_d-\beta_d)t\log(p_d)}.
          \end{align*}
        Note that both of these lie on the unit circle, so the problem reduces to finding $t$ so that $$-t \log p_r \approx \theta_r^j \, mod\, 2\pi.$$ We can use Kronecker's theorem to do this, however, because Kronecker's theorem only holds for a finite collection, it can only be done for finitely many primes. Because we want the measure $\lambda$ to be independent of which primes appear in a polynomial, we will construct the measure in steps, so that the point masses farther from zero approximate the $\omega_j$ more accurately for more primes. This way any prime that appears in a Dirichlet polynomial will appear in our approximation at some level. 

        We also want the error from the poor approximations near zero to be small compared to the measure, so it will become irrelevant when we take the limit in the mean. This means that $\lambda$ needs to have the property that the measures of intervals far from zero are much larger than the measures of intervals nearer to zero.  We will achieve this by placing more point masses for better approximations.

        \paragraph{Construction of $\lambda$}
          First construct $\lambda_1$. By Kronecker's theorem, we can find $t_1^{1,1},\dots,t_1^{N,1}$ and corresponding integers $q$ (not all the same) such that 
            $$|-t_1^{j,1}\log p_1 -\theta_1^j  -2\pi q|<2^{-1}; \text{ for } j=1,\dots,N.$$
          Repeat this to find $t_1^{j,2}>\max\limits_j t_1^{j,1}$ so that there are two point masses corresponding to each component of $\mu$. (In future steps, we will repeat this so that there are $2^k\cdot\|\lambda_{k-1}\|$ point masses for each component.) Now define
            $$ \lambda_1 = \sum_{j=1}^{N}c_j \left(\delta_{t_1^{j,1}} + \delta_{t_1^{j,2}}\right) $$
          and note that $\|\lambda_1\| = 2$.\\
          Inductively construct a sequence of measures $\{\lambda_k\}_{k=2}^\infty$:

          For $k>1$ choose $T_{k-1}> \max\{t_{k-1}^{j,m}; j=1,\dots,N,\, m=1,\dots,2^{k-1}\}$. Again using Kronecker's theorem, find points $\{t_k^{j,1}\}_{j=1}^N>T_{k-1}$ and corresponding integers $q$ such that
            \begin{equation} \label{star}
              |-t_k^{j,1}\log p_r -\theta_r^j  -2\pi q|<2^{-k}; \text{ for } j=1,\dots,N \text{ and } r=1,\dots,k.
            \end{equation}
          This means that this inequality holds for all $j$ and for the first $k$ primes (or, equivalently, the first $k$ coordinates in $\T^\infty$.) Repeat this to find $N$ more points $\{t_k^{j,2}\}_{j=1}^N$ that satisfy (\ref{star}) and such that $t_k^{j,2}>\max\limits_j t_k^{j,1}$. Continue until there are $M_k=2^k\cdot \|\lambda_{k-1}\|$ points for each $\omega_j$. Define
          \begin{align*}
            \gamma_1 &= \lambda_1\\
            \gamma_k &= \sum_{m=1}^{M_k} \sum_{j=1}^{N}c_j \delta_{t_k^{j,m}}, \quad \|\gamma_k\|=2^k\|\lambda_{k-1}\|
          \end{align*}
          and
          \begin{align}
            \lambda_k &= \lambda_{k-1}+\gamma_k = \sum_{\ell=1}^{k}\gamma_\ell\\
            \|\lambda_k\|&=\lambda_k\left([0,T_k]\right)=(2^k+1)\|\lambda_{k-1}\| \label{sizeofLambdaK}
          \end{align}
          and then let $\lambda = \sum_{\ell=1}^{\infty}\gamma_\ell$. Note that for any $T$ there is some $k$ such that $\lambda\left([0,T]\right)=\lambda_k\left([0,T]\right)$.

        \paragraph{Proof that $\lambda$ satisfies (\ref{nugood})}
          Now we will verify that this measure gives the correct limit. We will use the continuity of $|F|^2$ as in (\ref{continuitything}). Given $\epsilon>0$, there exists $\delta_j>0$ such that $\|\omega_j-z\|_{\T^d}<\delta_j\Rightarrow \left||F(\omega_j)|^2-|F(z)|^2\right|<\epsilon$. Now choose $\delta=\min\limits_j \delta_j$ so 
          \begin{equation}\label{cont}
            \|\omega_j-z\|_{\T^d}<\delta \Rightarrow \left||F(\omega_j)|^2-|F(z)|^2\right|<\epsilon\quad \forall j.
          \end{equation}

          Using (\ref{star}) and the fact that the length of a chord of a circle can be bounded by the corresponding part of the circumference, for $z_{k,r}^{j,m} = e^{-it_{k}^{j,m}\log p_r}$, and for large $k$, we have
          \begin{equation}
            |\omega_{j,r}-z_{k,r}^{j,m}|< 2\cdot 2^{-k} = 2^{-k+1}
          \end{equation}
          and
          \begin{align*}
            \|\omega_j-z_k^{j,m}\|_{\T^d}^2 
            &= |\omega_{j,1}-z_{k,1}^{j,m}|^2 +\cdots + |\omega_{j,d}-z_{k,d}^{j,m}|^2\\
            &= (2^{-k+1})^2\cdot d.
          \end{align*}

          For every $T$ there is some $k$ such that $T\in [T_k, T_{k+1}]$, so choose $T$ large enough that 
            $$(2^{-k+1})^2\cdot d < \delta^2.$$
          So for point masses $t^{j,m}\in \text{supp}\, \lambda \cap [T_{k-1},\infty)$, (\ref{star}) holds for all $j,m$. (Here we omit the subscript $k$ because the estimate works for every point mass in $[T_{k-1},\infty)$.) Rewriting the continuity argument (\ref{cont}) using the Bohr lift yields
          \begin{equation} \label{tosubEpsilon}
            \left||F(\omega_j)|^2-|f(it^{j,m})|^2\right|<\epsilon \quad \forall t^{j,m}\in \text{supp}\, \lambda \cap [T_{k-1},\infty).
          \end{equation}
          For this $T$, consider 
          \begin{align*}
            &\left|\frac{1}{\lambda([0,T])} \int_{0}^{T} |f(it)|^2 d\lambda - \int_{\T^{\infty}}|F(z)|^2d\mu \right| \\
              &\quad \leq \frac{\lambda_{k-1}\left(\left[0,T_{k-1}\right]\right)}{\lambda_k([0,T_k])} \|f\|^2_\infty 
                + \left|\frac{1}{\lambda([0,T])} \int_{T_{k-1}}^{T} |f(it)|^2 d\lambda - \int_{\T^{\infty}}|F(z)|^2d\mu \right|\\
              &\quad = \frac{1}{2^k+1} \|f\|^2_\infty 
                + \left|\frac{1}{\lambda([0,T])} \int_{T_{k-1}}^{T} |f(it)|^2 d\lambda - \int_{\T^{\infty}}|F(z)|^2d\mu \right|.
          \end{align*}
          The norm $\|f\|^2_\infty$ is bounded, so the first term goes to zero as $T$ (and therefore $k$) goes to infinity, so we only need to consider the second term:
          \begin{align*}
            &\left|\frac{1}{\lambda([0,T])} \int_{T_{k-1}}^{T} |f(it)|^2 d\lambda - \int_{\T^{\infty}}|F(z)|^2d\mu \right|\\
              &\quad=\left|\frac{1}{\lambda([0,T])} \left[\int_{T_{k-1}}^{T_k} |f(it)|^2 d\lambda +\int_{T_{k}}^{T} |f(it)|^2 d\lambda\right]
                  - \int_{\T^{\infty}}|F(z)|^2d\mu \right|.\\
            \intertext{Evaluate the integrals using the definition of $\lambda$ and letting $X_j = \{t_{k+1}^{j,m}\} \in [T_k, T]\cap \text{supp}\,\lambda\}$. (Note that $\lambda[T_k,T] = \sum_{j=1}^{N}c_j|X_j|$.)}
                &\quad=\left|\frac{1}{\lambda([0,T])} \left[\sum_{m=1}^{M_k}\sum_{j=1}^{N}c_j|f(it_k^{j,m})|^2 
                  +\sum_{j=1}^{N}c_j\sum_{t\in X_j}^{}|f(it)|^2\right]
                  - \int_{\T^{\infty}}|F(z)|^2d\mu \right|.\\
            \intertext{Add and subtract $|F(\omega_j)|^2$ appropriately, rearrange, and use the triangle inequality to get}
                &\quad\leq \frac{1}{\lambda([0,T])} \left[\sum_{m=1}^{M_k}\sum_{j=1}^{N}c_j\left||f(it_k^{j,m})|^2 - |F(\omega_j)|^2\right| +\sum_{j=1}^{N}c_j\sum_{t\in X_j}\left||f(it)|^2- |F(\omega_j)|^2\right|\right]\\
                  &\qquad \qquad+\left|\frac{2^k \|\lambda_{k-1}\|}{\lambda([0,T])}\int_{\T^{\infty}}|F(z)|^2d\mu - \int_{\T^{\infty}}|F(z)|^2d\mu +\frac{1}{\lambda([0,T])}\sum_{j=1}^{N}c_j\sum_{t\in X_j}|F(\omega_j)|^2\right|.\\
              \intertext{$T$ is large enough that the continuity condition (\ref{tosubEpsilon}) holds, so}
                &\quad < \frac{1}{\lambda([0,T])} \left[\sum_{m=1}^{M_k}\sum_{j=1}^{N}c_j\epsilon +\sum_{j=1}^{N}c_j\sum_{t\in X_j}\epsilon\right]\\
                  &\qquad \qquad+\left|\frac{2^k \|\lambda_{k-1}\|}{\lambda([0,T])}\int_{\T^{\infty}}|F(z)|^2d\mu - \int_{\T^{\infty}}|F(z)|^2d\mu +\frac{1}{\lambda([0,T])}\sum_{j=1}^{N}c_j\sum_{t\in X_j}|F(\omega_j)|^2\right|\\
                &\quad = \frac{(2^k)\|\lambda_{k-1}\| +\lambda[T_k,T]}{(2^k+1)\|\lambda_{k-1}\| +\lambda[T_k,T]} \epsilon\\
                  &\qquad \qquad+\left|\left(\frac{2^k \|\lambda_{k-1}\|}{\lambda([0,T])} - 1 \right)\int_{\T^{\infty}}|F(z)|^2d\mu
                  +\frac{1}{\lambda([0,T])}\sum_{j=1}^{N}c_j|X_j||F(\omega_j)|^2\right|.
          \end{align*}
          For large $k$, the first term is small, as needed. For the second term we consider three cases depending on the size of the sets $X_j$. When we constructed $\lambda$, we placed the point masses in sets of size $N$ so that each mass $\omega_j$ had a representative, and then we repeated this. This means that $\left||X_{j_\ell}|-|X_{j_i}|\right|\leq1$ for $i\neq\ell$, so we can consider the cases 
          \begin{enumerate}
             \item $|X_j|=0$ for all $j$,
             \item $|X_j|=C $ for all $j$, and
             \item $|X_j|=C$ for $j=1,\dots,J$ and $|X_j|=C+1$ for $j=J+1,\dots,N$.
           \end{enumerate}
           
          \paragraph{Case 1: $|X_j|=0$ for all $j$} In this case, $\frac{1}{\lambda([0,T])}\sum_{j=1}^{N}c_j|X_j||F(\omega_j)|^2=0$ and $\lambda([0,T])= \lambda([0,T_k]) = (2^k+1)\|\lambda_{k-1}\|$, so we have
          \begin{equation*}
            \left|\frac{2^k \|\lambda_{k-1}\|}{\lambda([0,T])} - 1 \right|\int_{\T^{\infty}}|F(z)|^2d\mu
            = \left|\frac{2^k}{2^k+1} - 1 \right|\int_{\T^{\infty}}|F(z)|^2d\mu
          \end{equation*}
          which is small for large $k$.
          \paragraph{Case 2: $|X_j|=C\leq M_{k+1}$ for all $j$} In this case $\lambda([0,T])= \lambda([0,T_k]) +\sum_{j=1}^{N}c_j\cdot C = (2^k+1)\|\lambda_{k-1}\| +C$. Also, note that $\sum_{j=1}^{N}c_j |X_j||F(\omega_j)|^2 = C\int_{\T^{\infty}}|F(z)|^2d\mu$. Then, substituting and simplifying gives
          \begin{align*}
            &\left|\left(\frac{2^k \|\lambda_{k-1}\|}{\lambda([0,T])} - 1 \right)\int_{\T^{\infty}}|F(z)|^2d\mu
                 +\frac{1}{\lambda([0,T])}\sum_{j=1}^{N}c_j |X_j||F(\omega_j)|^2\right|\\
              &\quad= \left|\frac{2^k \|\lambda_{k-1}\|+C}{(2^k+1)\|\lambda_{k-1}\| +C} - 1 \right|\int_{\T^{\infty}}|F(z)|^2d\mu
          \end{align*}
          and this is small for large $k$.
          \paragraph{Case 3: $|X_j|=C$ for $j=1,\dots,J$ and $|X_j|=C+1$ for $j=J+1,\dots,N$} Similarly to the previous case, we get
          \begin{align*}
            &\left|\left(\frac{2^k \|\lambda_{k-1}\|}{\lambda([0,T])} - 1 \right)\int_{\T^{\infty}}|F(z)|^2d\mu
                 +\frac{1}{\lambda([0,T])}\sum_{j=1}^{N}c_j |X_j||F(\omega_j)|^2\right|\\
              &\quad=\left|\left(\frac{2^k \|\lambda_{k-1}\|}{\lambda([0,T])} - 1 \right)\int_{\T^{\infty}}|F(z)|^2d\mu
                 +\frac{C}{\lambda([0,T])}\sum_{j=1}^{N}c_j|F(\omega_j)|^2 
                 +\frac{1}{\lambda([0,T])}\sum_{j=J+1}^{N}c_j|F(\omega_j)|^2\right|\\
              &\quad\leq \left|\frac{2^k \|\lambda_{k-1}\|+C}{(2^k+1)\|\lambda_{k-1}\| +C+ \sum_{j=J+1}^{N}c_j} - 1 \right|\int_{\T^{\infty}}|F(z)|^2d\mu
                +\frac{1}{\lambda([0,T])}\sum_{j=J+1}^{N}c_j|F(\omega_j)|^2.
          \end{align*}
          The first term is small as in Case 2, and in the second term $\frac{1}{\lambda([0,T])}$ is multiplied by a bounded quantity, and so this will be small for large $T$.

          In each case, 
            $$\left|\frac{1}{\lambda([0,T])} \int_{0}^{T} |f(it)|^2 d\lambda - \int_{\T^{\infty}}|F(z)|^2d\mu \right| $$
          is small for large $T$, and so the proof is complete.
      \end{proof}

      This construction also gives us the following lemma about the tail of the integral, roughly saying that the approximation error occurs near zero.
      \begin{lemma}\label{Tepsilon}
        If $\{\mu_n\}$ is a finite collection of measures $d\mu = \sum_{j=1}^{J_n} c_j^n \delta_{\omega_j^n}$ with $\sum_{j=1}^{J_n} c_j^n = 1$ and $\{F_m\}_{m=1}^M$ is a finite set of polynomials with corresponding Dirichlet polynomials $\{f_m\}$. If $\lambda_n$ is the measure constructed as in Lemma \ref{lincombo} corresponding to $\mu_n$, then given $\epsilon>0$, there exists $T_\epsilon, T'$ such that for $T>T'$
          $$\left|\frac{1}{\lambda_n([T_\epsilon,T])} \int_{T_\epsilon}^{T} |f_m(it)|^2 d\lambda_n-\int_{\T^{\infty}}|F_m(z)|^2d\mu_n\right|<\epsilon,\, \forall m,n.$$
      \end{lemma}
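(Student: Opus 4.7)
The plan is to observe that the proof of Lemma~\ref{lincombo} produced two genuinely distinct sources of error: a contribution from the poorly approximating point masses near zero, controlled in the earlier proof by the ratio $\|\lambda_{k-1}\|/\lambda_n([0,T_k]) = 1/(2^k+1)$, and a continuity contribution of order $\epsilon$. If the lower limit of integration is raised from $0$ to a threshold $T_\epsilon$ sitting strictly beyond the ``bad'' levels of the construction, the first source is eliminated outright, and the renormalization by $\lambda_n([T_\epsilon,T])$ in place of $\lambda_n([0,T])$ is precisely what makes this excision clean.

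To execute this plan, I would first pool the finitely many inputs into uniform data. Let $d$ be the largest number of variables appearing in any $F_m$. Using joint uniform continuity of the finite family $\{F_m\}$ on $\T^d$, choose a single $\delta>0$ so that $\|z-w\|_{\T^d}<\delta$ implies $\bigl||F_m(z)|^2-|F_m(w)|^2\bigr|<\epsilon$ for all $m$. Pick $k_\epsilon$ large enough that $(2^{-k_\epsilon+1})^2\, d < \delta^2$, and set $T_\epsilon := \max_n T^{(n)}_{k_\epsilon}$, where $T^{(n)}_{k_\epsilon}$ is the level-$k_\epsilon$ threshold appearing in the construction of $\lambda_n$ inside Lemma~\ref{lincombo}. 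By the Kronecker estimate (\ref{star}) and the chord-length bound, every point mass $t\in\operatorname{supp}\lambda_n\cap[T_\epsilon,\infty)$ lies within $\delta$ in $\T^d$ of the atom $\omega_j^n$ it was chosen to approximate, so the continuity estimate (\ref{tosubEpsilon}) holds uniformly in $m$ and $n$ for every such $t$.

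With this setup, I would repeat the case analysis of Lemma~\ref{lincombo} nearly verbatim, but with $[T_\epsilon,T]$ in place of $[0,T]$. Decomposing $\int_{T_\epsilon}^T|f_m(it)|^2\,d\lambda_n$ into the contributions of completed ``blocks'' $\gamma_k$ for $k>k_\epsilon$ (each block has mass $\|\gamma_k\|$ and, by the uniform continuity choice, integrates $|f_m|^2$ to within $\epsilon\|\gamma_k\|$ of $\|\gamma_k\|\int|F_m|^2\,d\mu_n$) plus a partial block at the top, the completed blocks reproduce $\int|F_m|^2\,d\mu_n$ up to a $\epsilon$-error, while the partial top block, handled exactly by the three cases on $|X_j|$ already appearing in Lemma~\ref{lincombo}, contributes a residual of order $1/\lambda_n([T_\epsilon,T])$. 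Choosing $T'>T_\epsilon$ large enough to force this residual below $\epsilon$ simultaneously for the finitely many pairs $(m,n)$ completes the estimate.

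The main obstacle is bookkeeping rather than analysis: one must verify that a single $T_\epsilon$ and a single $T'$ suffice uniformly over $m$ and $n$, despite the fact that the level thresholds in the different $\lambda_n$ are generated by independent invocations of Kronecker's theorem. Because both $\{F_m\}$ and $\{\mu_n\}$ are finite collections, this reduces to taking maxima over finite index sets, so no new analytic content beyond what already appears in Lemma~\ref{lincombo} is required.
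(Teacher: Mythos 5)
Your proposal is correct and follows essentially the same route the paper sketches: re-run the argument of Lemma~\ref{lincombo} on $[T_\epsilon,T]$, using uniform continuity of the finite polynomial family to pick a single cutoff level $k_\epsilon$, and take maxima over the finitely many indices $(m,n)$ to get a common $T_\epsilon$ and $T'$. The paper itself only offers a two-paragraph remark in place of a full proof, and your fill-in (splitting into completed blocks $\gamma_k$, $k>k_\epsilon$, which are all ``good'' once the lower limit is raised, plus a partial top block) is a faithful and somewhat cleaner unpacking of the same idea; the one implicit assumption worth flagging, shared with the paper's own proof of Lemma~\ref{lincombo}, is that $k_\epsilon$ must also exceed the number $d$ of variables so that the Kronecker estimate (\ref{star}) covers all primes that actually appear in the $F_m$.
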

        The proof is very similar to the proof of Lemma \ref{lincombo}, with the slight difference that we are also choosing a $T'$. This is necessary because we need $\lambda_n([T_\epsilon,T])$ to be large, and because we need each point mass of each $\mu_n$ to be represented by at least one point mass in our interval. There always is some sufficiently large $T'$, but even for very large $T_\epsilon$, one must still have a large interval $(T_\epsilon, T)$. (For example, the estimate may not hold for $(T_\epsilon, T_\epsilon +1)$.)

        One way to ensure that $T'$ is large enough is to require that each $\omega_j^n$ has many more than $\left\lceil \sum_{\mu_n} \sum_{j=1}^{{J_n}} |c_j| \right\rceil$ representatives $t_\ell^j$ in $[T_\epsilon, T']$ (i.e. such that $|X_j|=|\{t_\ell^j\in [T_\epsilon, T']\}|\gg \left\lceil \sum_{\mu_n} \sum_{j=1}^{{J_n}} |c_j| \right\rceil$, for each $j$).
      \begin{remark}
        If $T^{*}>T_\epsilon$,  $T'$ can be found so that the estimate of the lemma holds on $[T^{*},T]$ for $T>T'$. Roughly, this says that the estimate doesn't get any worse, as long as the intervals are sufficiently long.
      \end{remark}

\section{Proof of Theorem \ref{mainthm} part \ref{hardway}} 
  \label{secproof_of_theorem_mainthm}
    We may now return to the main result.
    \begin{mythma} 
      Let \(\mu\) be a Borel probability measure on the infinite torus \(\T^{\infty}\). There exists a locally finite Borel measure \(\lambda\) on $\R$, such that, for all $f \in \cA(\cplus)$
      \begin{equation} \tag{\ref{bigequality}}
        \lim_{T \to \infty} \frac{1}{\lambda([0,T])} \int_{0}^{T}\left|f(it)\right|^2 d\lambda(t)
        = \int_{\T^{\infty}} \left| F(z)\right|^2 d\mu(z).
      \end{equation} 
    \end{mythma}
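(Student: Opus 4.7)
The plan is to combine the point-mass construction (Lemma \ref{lincombo}), the tail estimate (Lemma \ref{Tepsilon}), and the weak-$\ast$ density (Lemma \ref{dense}), together with the density of Dirichlet polynomials in $\cA(\cplus)$ furnished by Theorem \ref{aron}, via a diagonal construction.

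First I would set up three simultaneous approximations. The polytorus $\T^{\infty}$ is a compact separable metric space, so Lemma \ref{dense} yields a sequence of finitely supported Borel probability measures $\mu_n$ converging to $\mu$ in the weak-$\ast$ topology; in particular, $\int|F|^2\,d\mu_n \to \int|F|^2\,d\mu$ for every Dirichlet polynomial $f$ with Bohr lift $F$, since $|F|^2$ is continuous on $\T^{\infty}$. The space $\cA(\cplus)$ is separable (Dirichlet polynomials with Gaussian rational coefficients form a countable dense subset), so I fix a countable dense family $\{f_m\}$ of Dirichlet polynomials with corresponding lifts $\{F_m\}$. For each $n$, Lemma \ref{lincombo} produces a measure $\lambda_n$ realizing the time-averaging identity for $\mu_n$ against every Dirichlet polynomial.

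Next I would assemble $\lambda$ by concatenation. Inductively choose cutoffs $0 = A_0 < A_1 < A_2 < \cdots$ and set $\lambda|_{[A_{n-1},A_n)} = \lambda_n|_{[A_{n-1},A_n)}$. At step $n$, invoke Lemma \ref{Tepsilon} with starting point $T^* = A_{n-1}$ (permitted by the remark following that lemma) applied to the singleton $\{\mu_n\}$ and the test polynomials $\{f_1,\ldots,f_n\}$, choosing $A_n$ large enough that
\[
\left|\frac{1}{\lambda_n([A_{n-1},A_n])}\int_{A_{n-1}}^{A_n}|f_m(it)|^2\,d\lambda_n - \int_{\T^{\infty}}|F_m|^2\,d\mu_n\right| < \frac{1}{n},\quad m \leq n,
\]
while simultaneously requiring $\lambda_n([A_{n-1},A_n]) \geq n\cdot\lambda([0,A_{n-1}])$ so that each new stage dwarfs all accumulated prior mass. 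Setting $\lambda := \sum_n \lambda_n|_{[A_{n-1},A_n)}$ produces a locally finite Borel measure on $\R$.

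To verify \eqref{bigequality}, fix $f\in\cA(\cplus)$ and $\varepsilon>0$; pick $f_m$ with $\|f-f_m\|_\infty<\varepsilon$ (the Bohr lift on the closed polydisk also gives $\sup_{\T^\infty}|F-F_m|<\varepsilon$). Writing the running average of $|f_m|^2$ as a convex combination of the stage-by-stage averages and setting $e_n = \bigl|\lambda([0,A_n])^{-1}\!\int_0^{A_n}|f_m|^2\,d\lambda - \int|F_m|^2\,d\mu\bigr|$, one obtains a recursion roughly of the form $e_n \leq e_{n-1}/(n+1) + 1/n + \delta_n$, where $\delta_n = \bigl|\!\int|F_m|^2\,d\mu_n - \int|F_m|^2\,d\mu\bigr| \to 0$ by weak-$\ast$ convergence. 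This forces $e_n \to 0$, and the convex-combination bound propagates the conclusion to all $T$; swapping $f_m$ back for $f$ costs at most $(2\|f\|_\infty+\varepsilon)\varepsilon$ on each side. The main obstacle is precisely this bookkeeping: one single measure $\lambda$ must diagonalize all three approximation processes (finitely supported $\mu$, finite $T$, polynomial $f$) while the stage masses grow quickly enough to defeat the accumulated prehistory in the Ces\`aro average yet remain controllable by Lemma \ref{Tepsilon} at every step. The remark following that lemma is essential here, since it allows stage $n$ to begin exactly at $A_{n-1}$ without loss of accuracy.
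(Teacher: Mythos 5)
Your plan is structurally the same as the paper's: use Lemma \ref{dense} to approximate $\mu$ weak-$\ast$ by finitely supported measures, apply Lemma \ref{lincombo} to each to get a candidate $\lambda_n$, splice pieces of the $\lambda_n$ together along $\R$ using Lemma \ref{Tepsilon} and its remark to control the pieces, and let rapidly growing stage masses swamp the prehistory. The reduction to a countable dense family $\{f_m\}$ via Theorem \ref{aron} is also exactly the paper's first move. Where you genuinely simplify is by using a single $\mu_n$ per stage and letting weak-$\ast$ convergence furnish $\delta_n\to0$, instead of the paper's more elaborate scheme of choosing $2^k$ subsequences $\mu_{j,k}$ at level $k$ with prescribed rates and normalized sums.

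The gap is in the sentence \emph{``the convex-combination bound propagates the conclusion to all $T$.''} Your recursion $e_n \leq e_{n-1}/(n+1) + 1/n + \delta_n$ controls only the values $T=A_n$. For $T\in(A_{n-1},A_n)$ you must write the running average as $pa+qb'$ with $p=\lambda([0,A_{n-1}])/\lambda([0,T])$ and $b'$ the average over $[A_{n-1},T]$, and then you need either $q$ small or $|b'-\int|F_m|^2\,d\mu|$ small. Lemma \ref{Tepsilon} (with the remark) only guarantees the latter for $T$ past the threshold $T'$ it produces, and your construction places no bound on $\lambda_n([A_{n-1},T'])$. Since the measures $\lambda_n$ from Lemma \ref{lincombo} have mass that jumps super-exponentially between their internal levels, $\lambda_n([A_{n-1},T'])$ could easily be comparable to, or vastly exceed, $\lambda([0,A_{n-1}])$; in that regime $q$ is not small and $b'$ is uncontrolled, so the error at that $T$ is not bounded by anything tending to $0$. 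This is precisely the issue the paper's repeated-subinterval device is designed to eliminate: each stage is chopped into $\|\lambda^{(k-1)}\|$ sub-blocks of mass exactly $2^k$, each one independently satisfying the Lemma \ref{Tepsilon} estimate, so the ``incomplete'' contribution at any $T$ is at most one sub-block's mass $2^{k+1}$, dwarfed by the accumulated total $\geq 2^{k(k+1)/2}$. To rescue your version you would need an explicit argument that, for every stage, $\lambda_n\bigl([A_{n-1},T']\bigr)$ is negligible next to $\lambda([0,A_{n-1}])$ — which is not automatic and not stated — or you should re-introduce sub-block repetition within each stage.
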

    \begin{proof}
      Let $\{F_m\}_{m=1}^{\infty}$ be a countable set of polynomials which is dense in $A(\D^\N)$. (By definition, the polynomials are dense in $A(\D^\N)$, and there is a countable dense set of polynomials within each $A(\D^d)$ for finite $d$, so use Cantor diagonalization.) We only need to prove the theorem for $F_m$ (corresponding to a Dirichlet polynomial $f_m$) in this dense set: for $F\in A(\D^\N)$, there is some $F_m$ such that $\sup\left||F_m|^2-|F|^2\right|<\epsilon$ (and similarly $\sup\left||f_m|^2-|f|^2\right|<\epsilon$.) So 
      \begin{align*}
        \left|\,\int_{\T^{\infty}}|F_m(z)|^2d\mu -\int_{\T^{\infty}}|F(z)|^2d\mu \right| &< \epsilon\\
        \intertext{and}
        \left|\frac{1}{\lambda[0,T]}\int_{0}^{T}|f_m(it)|^2d\lambda - \frac{1}{\lambda[0,T]}\int_{0}^{T}|f(it)|^2d\lambda\right| &< \epsilon
      \end{align*}
      for any measure $\lambda$ and for all $T$. So now we have
      \begin{align*}
        &\left|\frac{1}{\lambda[0,T]}\int_{0}^{T}|f(it)|^2d\lambda -\int_{\T^{\infty}}|F(z)|^2d\mu\right|\\
          &\quad\leq \left|\frac{1}{\lambda[0,T]}\int_{0}^{T}|f_m(it)|^2d\lambda - \frac{1}{\lambda[0,T]}\int_{0}^{T}|f(it)|^2d\lambda\right| 
            +\left|\frac{1}{\lambda[0,T]}\int_{0}^{T}|f_m(it)|^2d\lambda - \int_{\T^{\infty}}|F_m(z)|^2d\mu\right|\\
            &\qquad \qquad+\left|\,\int_{\T^{\infty}}|F_m(z)|^2d\mu -\int_{\T^{\infty}}|F(z)|^2d\mu \right|\\
          &\quad< \epsilon +\left|\frac{1}{\lambda[0,T]}\int_{0}^{T}|f_m(it)|^2d\lambda - \int_{\T^{\infty}}|F_m(z)|^2d\mu\right| + \epsilon.
      \end{align*}
      It remains to show that 
      \begin{equation} \label{equalityforborel}
        \left|\frac{1}{\lambda[0,T]}\int_{0}^{T}|f_m(it)|^2d\lambda - \int_{\T^{\infty}}|F_m(z)|^2d\mu\right|<\epsilon.
      \end{equation} 
      By Lemma \ref{dense}, there exists a sequence of linear combinations of point masses $\{\mu_n\}$ that converges weak-$\ast$ to $\mu$. Choose $k$ subsequences $\{\mu_{n_{j,k}}\}$ such that for $m=1,\dots ,2^k$,
      \begin{equation*}
        \left|\,\int_{\T^{\infty}}|F_m(z)|^2d\mu_{n_{j,k}} -\int_{\T^{\infty}}|F_m(z)|^2d\mu\right|<2^{-j}
      \end{equation*}
      and re-index so that $\{\mu_{j,k}\}=\{\mu_{n_{j,k}}\}$. Now, given $m$, for $k\geq \lceil \log_2 m \rceil$, we have control over the convergence:
      \begin{equation}\label{goodmuseq}
        \left|\,\int_{\T^{\infty}}|F_m(z)|^2d\mu_{j,k} -\int_{\T^{\infty}}|F_m(z)|^2d\mu\right|<2^{-j}.
      \end{equation}
      Also, for each of these $\mu_{j,k}$, there is a corresponding $\lambda_{j,k}$ as constructed above that satisfies (\ref{nugood}) for all Dirichlet polynomials, and in particular, works for all $F_m$. 

      \paragraph{Construction of $\lambda$} We construct $\lambda$ using a process similar to the linear combination case: we will find an approximation and then repeat it so that better approximations appear more. However, this case, we will not be approximating with point masses, but with the measures $\lambda_j$ constructed as in the previous case using the Lemma \ref{lincombo}. 

      By Lemma \ref{Tepsilon}, there is some $T_1$ such that for $j,m =1,2$ and for sufficiently large $T_1^{(1)}$
      \begin{equation}\label{star1}
        \left|\frac{1}{\lambda[T_1,T_1^{(1)}]}\int_{T_1}^{T_1^{(1)}}|f_m(it)|^2d\lambda_j - \int_{\T^{\infty}}|F_m(z)|^2d\mu_{j,k}\right|<\frac{1}{2}.
      \end{equation}
      Define
      $$\lambda^{(1)}=\gamma_1^{(1)} = \sum_{j=1}^{2}\frac{\lambda_j|_{[T_1, T_1^{(1)}]}}{\lambda_j([T_1, T_1^{(1)}])} $$
      Note that $\text{supp}\, \lambda^{(1)} = [T_1, T_1^{(1)}]$ and $\|\lambda^{(1)}\|=2.$ 

      Now find $T_2\geq T_1^{(1)}$ such that for $j,m=1, \dots, 4$, and for sufficiently large $T$,
      \begin{equation}\label{starstar}
        \left|\frac{1}{\lambda[T_2,T]}\int_{T_2}^{T}|f_m(it)|^2d\lambda_j - \int_{\T^{\infty}}|F_m(z)|^2d\mu_{j,k}\right|<\frac{1}{2^2}.
      \end{equation}
      Let $T_2^{(1)}$ be large enough that (\ref{starstar}) holds. Now we repeat the approximation: by the remark after the proof of Lemma \ref{Tepsilon}, there is some $T_2^{(2)}$ large enough so that for $j,m = 1,\dots, 4$
      \begin{equation*}
        \left|\frac{1}{\lambda[T_2^{(1)},T_2^{(2)}]}\int_{T_2^{(1)}}^{T_2^{(2)}}|f_m(it)|^2d\lambda_j - \int_{\T^{\infty}}|F_m(z)|^2d\mu_{j,k}\right|<\frac{1}{2^2}.
      \end{equation*}
      Define 
      \begin{equation*}
        \gamma_2^{(\ell)} = \sum_{j=1}^{2^2}\frac{\lambda_j|_{[T_2^{(\ell-1)},T_2^{(\ell)}]}}{\lambda_j([T_2^{(\ell-1)},T_2^{(\ell)}])},\qquad T_2^{(0)} = T_2
      \end{equation*}
      and 
        $$\lambda^{(2)}= \lambda^{(1)}+ \sum_{\ell =1}^{\|\lambda^{(1)}\|=2}\gamma_2^{(\ell)}.$$ 
      Note that $\|\gamma_2^{(\ell)}\|=2^2$ and $\|\lambda^{(2)}\| = \|\lambda^{(1)}\|+2\cdot \|\gamma_2^{(\ell)}\| $. 

      Continue this process, at level $k$ finding $T_k\geq T_{k-1}^{(\|\lambda^{(k-2)}\|)} $ such that for $j,m=1, \dots 2^k$ and for sufficiently large $T$
      \begin{equation}\label{starK}
        \left|\frac{1}{\lambda[T_k,T]}\int_{T_k}^{T}|f_m(it)|^2d\lambda_j - \int_{\T^{\infty}}|F_m(z)|^2d\mu_{j,k}\right|<\frac{1}{2^k}.
      \end{equation}
      Let $T_k^{(1)}=T$ be large enough that (\ref{starK}) holds and find $T_k^{(2)}$ such that
      \begin{equation*}
        \left|\frac{1}{\lambda[T_k^{(1)},T_k^{(2)}]}\int_{T_k^{(1)}}^{T_k^{(2)}}|f_m(it)|^2d\lambda_j - \int_{\T^{\infty}}|F_m(z)|^2d\mu_{j,k}\right|<\frac{1}{2^k}.
      \end{equation*}
      Repeat this to get an increasing sequence $\{T_k^{(\ell)}\}_{\ell=0}^{\|\lambda_{k-1}\|}$ (where $T_k^{(0)}=T_k$) such that for $j,m=1,\dots,2^k$ and for $\ell=1,\dots,\|\lambda^{(k-1)}\|$,
      \begin{equation}\label{Tkell}
        \left|\frac{1}{\lambda[T_k^{(\ell-1)},T_k^{(\ell)}]}\int_{T_k^{(\ell-1)}}^{T_k^{(\ell)}}|f_m(it)|^2d\lambda_j - \int_{\T^{\infty}}|F_m(z)|^2d\mu_{j,k}\right|<\frac{1}{2^k}.
      \end{equation}
      Define for $\ell=1,\dots,\|\lambda^{(k-1)}\|$
      \begin{equation*}
        \gamma_k^{(\ell)} = \sum_{j=1}^{2^k}\frac{\lambda_j|_{[T_k^{(\ell-1)},T_k^{(\ell)}]}}{\lambda_j([T_k^{(\ell-1)},T_k^{(\ell)}])},\qquad \|\gamma_k^{(\ell)}\|=2^k
      \end{equation*}
      and 
        $$\lambda^{(k)}= \lambda^{(k-1)}+ \sum_{\ell =1}^{\|\lambda^{(k-1)}\|}\gamma_k^{(\ell)}.$$
      Letting $\lambda = \lim\limits_{k\to\infty} \lambda^{(k)}$, we have that $\lambda[0, T_{k+1}] = \|\lambda^{(k)}\| = (2^k+1)\|\lambda^{(k-1)}\|= (2^k+1)\lambda[0,T_k]$.
      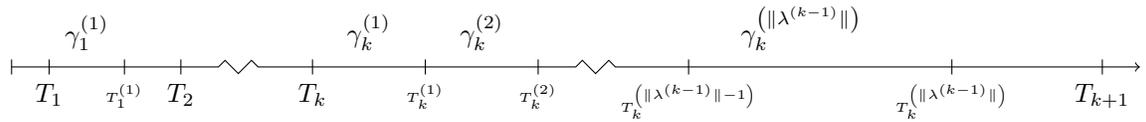
\begin{figure}[H]\label{fig:BorelDistr}
        \centering
        \begin{tikzpicture}
  \draw[->,decoration=zigzag] (0,0) -- (2.75, 0) decorate {--(3.4, 0)} --(7.5,0) decorate {--(8.15,0)}-- (15,0);
  \foreach \i in {0,.5,1.5,2.25,4,5.5,7,9, 12.5, 14.5} 
  {
    \draw (\i,0.1) -- + (0,-0.2) ;
  }
  \node[below] at (.5,-.1) {$T_1$};
    \node[above] at (1,.1) {$\gamma_1^{(1)}$};
    \node[below] at (1.5,-.1) {\tiny$T_1^{(1)}$};
  \node[below] at (2.25,-.1) {$T_2$};

  \node[below] at (4,-.1) {$T_k$};
    \node[above] at (4.75,.1) {$\gamma_k^{(1)}$};
    \node[below] at (5.5,-.1) {\tiny$T_k^{(1)}$};
    \node[above] at (6.25,.1) {$\gamma_k^{(2)}$};
    \node[below] at (7,-.1) {\tiny$T_k^{(2)}$};

    \node[below] at (9,-.1){\tiny$T_k^{\left(\|\lambda^{(k-1)}\|-1\right)}$};
    \node[above] at (10.5,.1) {$\gamma_k^{\left(\|\lambda^{(k-1)}\|\right)}$};
    \node[below] at (12.5,-.1) {\tiny$T_k^{\left(\|\lambda^{(k-1)}\|\right)}$};

  \node[below] at (14.5,-.1) {$T_{k+1}$};
\end{tikzpicture}
        \caption[Construction of the Borel measure]{At each level $k$ the small intervals give the same quality of estimate, as in \eqref{Tkell}. The repetition here serves the same purpose as in the point mass case: better estimates contribute more to mass of the measure over the real line.}
      \end{figure}
      \paragraph{Proof that $\lambda$ satisfies (\ref{equalityforborel})} 
      For any $T$ there is some $k$ such that $T\in [T_{k+1}, T_{k+2}]$. Choose $T$ large enough that $2^{k}\geq m$. From here, consider three cases: where $T>T_{k+1}^{(\|\lambda^{(k)}\|)}$, where $T<T_{k+1}^{(1)}$, and where $T\in [T_{k+1}^{(q)}, T_{k+1}^{(q+1)}]$ for some $1\leq q < \|\lambda^{(k)}\| $.

      Now, examine
      \begin{align*}
        &\left|\frac{1}{\lambda[0,T]} \int_{0}^{T} |f_m(it)|^2 d\lambda-\int_{\T^{\infty}}|F_m(z)|^2d\mu\right|\nonumber\\
          &\quad\leq \left|\frac{1}{\lambda[0,T]} \int_{0}^{T_{k}} |f_m(it)|^2 d\lambda\right| + \left|\frac{1}{\lambda[0,T]} \int_{T_{k}}^{T} |f_m(it)|^2 d\lambda-\int_{\T^{\infty}}|F_m(z)|^2d\mu\right|\nonumber\\
          &\quad \leq \frac{1}{2^k+1}\|f_m\|^2_\infty + \left|\frac{1}{\lambda[0,T]} \int_{T_{k}}^{T_{k+1}} |f_m(it)|^2 d\lambda + \frac{1}{\lambda[0,T]} \int_{T_{k+1}}^{T} |f_m(it)|^2 d\lambda-\int_{\T^{\infty}}|F_m(z)|^2d\mu\right|\nonumber\\
      \end{align*}
      The first term here is small for large $T$, so consider the second term, using the definition of $\lambda$ and adding and subtracting $\int|F_m|^2d\mu_{j,k}$ and $\int|F_m|^2d\mu$ where appropriate
      \begin{align}
        &\left|\frac{1}{\lambda[0,T]} \int_{T_{k}}^{T_{k+1}} |f_m(it)|^2 d\lambda-\int_{\T^{\infty}}|F_m(z)|^2d\mu\right|\nonumber\\
          &\quad\leq \frac{1}{\lambda[0,T]} \sum_{\ell=1}^{\|\lambda^{(k-1)}\|}\sum_{j=1}^{2^{k}}\left|\frac{1}{\lambda_{j,k}[T_{k}^{\ell-1},T_{k}^{\ell}]}\int_{T_{k}^{\ell-1}}^{T_{k}^{\ell}}|f_m|^2d\lambda_{j,k} - \int_{\T^\infty}^{}|F_m|^2d\mu_{j,k}\right|\nonumber\\
            &\qquad \qquad+ \frac{1}{\lambda[0,T]} \sum_{\ell=1}^{\|\lambda^{(k-1)}\|}\sum_{j=1}^{2^{k}}\left|\,\int_{\T^{\infty}}^{}|F_m|^2d\mu_{j,k} -\int_{\T^\infty}^{}|F_m|^2d\mu\right|\nonumber\\ 
            &\qquad \qquad+ \left|\frac{1}{\lambda[0,T]} \int_{T_{k+1}}^{T} |f_m(it)|^2 d\lambda + \left(\frac{2^k\|\lambda^{(k-1)}\|}{\lambda[0,T]} -1\right)\int_{\T^{\infty}}|F_m(z)|^2d\mu\right|. \nonumber\\
        \intertext{Applying (\ref{Tkell}) then gives}
          &\quad < \frac{\|\lambda^{(k-1)}\|}{\lambda[0,T]}\sum_{j=1}^{2^{k}}2^{-k} + \frac{\|\lambda^{(k-1)}\|}{\lambda[0,T]} \sum_{j=1}^{2^{k}}\left|\,\int_{\T^{\infty}}^{}|F_m|^2d\mu_{j,k} -\int_{\T^\infty}^{}|F_m|^2d\mu\right| \nonumber\\ 
            &\qquad \qquad+ \left|\frac{1}{\lambda[0,T]} \int_{T_{k+1}}^{T} |f_m(it)|^2 d\lambda + \left(\frac{2^k\|\lambda^{(k-1)}\|}{\lambda[0,T]} -1\right)\int_{\T^{\infty}}|F_m(z)|^2d\mu\right|. \nonumber\\
        \intertext{Note that $\lambda[0,T] > \lambda[0,T_{k+1}] = (2^k+1)\|\lambda^{(k-1)}\|$. We can get control over the second term by recalling how we chose our sequence of $\mu_{j,k}$ as in \eqref{goodmuseq} because $m\leq 2^{k}$:}
          &\quad < \frac{1}{2^{k}+1}\sum_{j=1}^{2^{k}}2^{-k} + \frac{1}{2^{k}+1} \sum_{j=1}^{2^{k}}2^{-j} \nonumber\\ 
            &\qquad \qquad+ \left|\frac{1}{\lambda[0,T]} \int_{T_{k+1}}^{T} |f_m(it)|^2 d\lambda + \left(\frac{2^k\|\lambda^{(k-1)}\|}{\lambda[0,T]} -1\right)\int_{\T^{\infty}}|F_m(z)|^2d\mu\right|. \label{reduce1and3}\\
        \intertext{Now, because $\sum_{j=1}^{2^{k}}2^{-k}=1$ and $\sum_{j=1}^{2^{k}}2^{-j}<1$, we have}
          &\quad < \frac{2}{2^k+1} + \left|\frac{1}{\lambda[0,T]} \int_{T_{k+1}}^{T} |f_m(it)|^2 d\lambda + \left(\frac{2^k\|\lambda^{(k-1)}\|}{\lambda[0,T]} -1\right)\int_{\T^{\infty}}|F_m(z)|^2d\mu\right|. \label{goodfor1}
      \end{align}
      The first term is obviously small for large $k$, so we only need to consider the term in absolute values. Similarly to the above proof for point masses, we must now consider two cases: where $T$ falls after the repetitions, and where $T$ falls within one of the repetitions:
        \begin{enumerate}
          \item $T>T_{k+1}^{(\|\lambda^{(k)}\|)}$ so that $T$ occurs after all the repetitions. 
          \item $T_{k+1}^{(q)}\leq T \leq T_{k+1}^{(q+1)}$ for some $0\leq q < \|\lambda^{(k)}\| $ so that $T$ falls within a repetition.
        \end{enumerate}
      For the first case, note that $\lambda$ is not supported on $[T_{k+1}^{(\|\lambda^{(k)}\|)}, T_{k+2}] $, so this is equivalent to $T=T_{k+2}.$ It is not hard to see that the proof for this case is equivalent to the proof if $T=T_{k+1}$. In \ref{goodfor1}, setting $T=T_{k+1}$ forces the integral to be $0$, and the second term is small for large $k$ because $\lambda[0,T]=\lambda[0,T_{k+1}] = (2^k +1)\|\lambda^{(k-1)}\|$.

      For the second case, $\lambda[T_k+1, T]$ consists of the first $q$ subintervals, and then has a contribution from $\left[T_{k+1}^{(q)},T_{k+1}^{(q+1)}\right]$:
        \begin{align}
          \lambda[T_{k+1}, T] &= \sum_{\ell=1}^{q}\gamma_{k+1}^{(\ell)}[T_{k+1}^{\ell-1},T_{k+1}^{\ell}]+ \gamma_{k+1}^{(q+1)}[T_{k+1}^{q},T]\label{case3lastchunk}\\
            &\leq 2^{k+1}q+2^{k+1}\nonumber
        \end{align}
      Now, expand the term in absolute values using the definition of $\lambda$ and adding and subtracting appropriate terms. (Here, we use the Dirac delta so that if $q=0$, terms (I) and (III) are zero.)
        $$\delta(q) = \begin{cases}
            0 &\quad \text{if } q=0\\
            1 &\quad \text{if } q\neq0.
          \end{cases} $$

      \begin{align*}
        &\left|\frac{1}{\lambda[0,T]}\int_{T_{k+1}}^{T} |f_m(it)|^2 d\lambda +
              \left(\frac{2^k\|\lambda^{(k-1)}\|}{\lambda[0,T]} -1\right)\int_{\T^{\infty}}|F_m(z)|^2d\mu\right|\\
          &\quad\leq \delta(q)\frac{1}{\lambda[0,T]} \sum_{\ell=0}^{q-1}\sum_{j=1}^{2^{k+1}}\left|\frac{1}{\lambda_{j,k}[T_{k+1}^{(\ell)},T_{k+1}^{(\ell+1)}]}\int_{T_{k+1}^{(\ell)}}^{T_{k+1}^{(\ell+1)}}|f_m|^2d\lambda_{j,k} - \int_{\T^\infty}^{}|F_m|^2d\mu_{j,k}\right|\tag{I}\\
            &\qquad\qquad + \frac{1}{\lambda[0,T]}\sum_{j=1}^{2^{k+1}}\frac{1}{\lambda_{j,k}[T_{k+1}^{(q)},T_{k+1}^{(q+1)}]}\int_{T_{k+1}^{(q)}}^{T} |f_m(it)|^2 d\lambda_{j,k} \tag{II}\\
            &\qquad \qquad+ \delta(q)\frac{1}{\lambda[0,T]} \sum_{\ell=0}^{q-1}\sum_{j=1}^{2^{k+1}} \left|\,\int_{\T^\infty}^{}|F_m|^2d\mu_{j,k} -\int_{\T^\infty}^{}|F_m|^2d\mu\right|\tag{III}\\
            &\qquad\qquad + \left|\frac{2^k\|\lambda^{(k-1)}\|+2^{k+1}q}{\lambda[0, T]}-1\right| \int_{\T^\infty}^{}|F_m|^2d\mu. \tag{IV} \\
      \end{align*}
      \begin{align*}
        \intertext{Using \eqref{case3lastchunk}, $\frac{1}{\lambda[0,T]}>\frac{1}{\lambda[0,T_{k+1}]+2^{k+1}(q+1)} $ so (II) is bounded by $\frac{2^{k+1}}{\lambda[0,T_{k+1}]+2^{k+1}(q+1)}\left\|f_m\right\|^2_\infty$. Similarly to \eqref{reduce1and3}, (I) and (III) can be simplified using \eqref{Tkell} and \eqref{goodmuseq} yielding:} 
          \text{(I)+(III)+(II)+(IV)}
            &< \frac{q}{\lambda[0,T_{k+1}]+2^{k+1}(q+1)} \left(\sum_{j=1}^{2^{k+1}}2^{-(k+1)} + \sum_{j=1}^{2^{k}}2^{-j}\right)\tag{I+III}\\
            &\qquad \qquad + \frac{2^{k+1}}{\lambda[0,T_{k+1}]+2^{k+1}(q+1)}\left\|f_m\right\|^2_\infty\tag{II}\\
            &\qquad \qquad + \left|\frac{2^k\|\lambda^{(k-1)}\|+2^{k+1}q}{\lambda[0,T_{k+1}]+2^{k+1}(q+1)}-1\right| \int_{\T^\infty}^{}|F_m|^2d\mu.\tag{IV} \\  
      \end{align*}
      The first term here is small for large $k$ because $\left(\sum_{j=1}^{2^{k+1}}2^{-(k+1)} + \sum_{j=1}^{2^{k}}2^{-j}\right)<2$. To show that the other terms are small, consider
      \begin{equation}\label{eqrecursive}
        \lambda[0,T_{k+1}] = (2^k+1)\|\lambda^{(k-1)}\| = (2^k+1)(2^{k-1}+1)\cdots(2^2+1)\cdot 2> 2^{\frac{k(k+1)}{2}}.
      \end{equation}
      Now, use the first part of \eqref{eqrecursive} to simplify the term from (IV), note that it is the same as from (II), and use the inequality:
      \begin{align*}
        \left|\frac{2^k\|\lambda^{(k-1)}\|+2^{k+1}q}{\lambda[0,T_{k+1}]+2^{k+1}(q+1)}-1\right|
          &= \frac{2^{k+1}}{\lambda[0,T_{k+1}]+2^{k+1}(q+1)}\\
          &< \frac{2^{k+1}}{2^{\frac{k(k+1)}{2}}+2^{k+1}(q+1)}.
      \end{align*}
      This is small for large $k$, and thus both (II) and (IV) are small, as needed.

      Because $k$ depends on $T$, with $T$ large implying that $k$ is large, we've shown in each case that for large T, $\left|\frac{1}{\lambda[0,T]} \int_{0}^{T} |f_m(it)|^2 d\lambda-\int_{\T^{\infty}}|F_m(z)|^2d\mu\right|$ is as small as desired, so:
        $$\lim\limits_{T\to\infty}\frac{1}{\lambda[0,T]} \int_{0}^{T} |f_m(it)|^2 d\lambda = \int_{\T^{\infty}}|F_m(z)|^2d\mu. $$
    \end{proof}

\subsection{Proof of Theorem \ref{mainthm}, Part \textit{(ii)}} 
  \label{secconverse}
    Recall the statement of Theorem \ref{mainthm}, Part \textit{(ii)}:
    \begin{mythmb} 
        Let $\lambda$ be a locally finite Borel measure on $\R$ such that the limit on the left hand side of (\ref{bigequality}) exists and is finite for all $f \in \cA(\cplus)$. Then there exists a unique Borel probability measure $\mu$ on the infinite torus \(\T^{\infty}\) such that, for all $f \in \cA(\cplus)$, (\ref{bigequality}) holds.
        \begin{equation} \tag{\ref{bigequality}}
          \lim_{T \to \infty} \frac{1}{\lambda([0,T])} \int_{0}^{T}\left|f(it)\right|^2 d\lambda(t)
          = \int_{\T^{\infty}} \left| F(z)\right|^2 d\mu(z).
        \end{equation} 
      \end{mythmb}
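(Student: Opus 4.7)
, Part \ref{easyway}} The plan is to interpret the hypothesis as defining a positive linear functional on a dense subspace of $C(\T^\infty)$ and invoke the Riesz representation theorem. First, by the standard polarization identity
\[
  f\bar g = \tfrac{1}{4}\bigl(|f+g|^2-|f-g|^2+i|f+ig|^2-i|f-ig|^2\bigr),
\]
the hypothesis that the limit in \eqref{bigequality} exists for every $f\in\cA(\cplus)$ yields a well-defined sesquilinear form
\[
  B(f,g)=\lim_{T\to\infty}\frac{1}{\lambda([0,T])}\int_{0}^{T} f(it)\overline{g(it)}\,d\lambda(t)
\]
on $\cA(\cplus)$, which is bounded by $\|f\|_\infty \|g\|_\infty$ since each average is.

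Next, I will transfer $B$ through the Bohr lift to the analytic polynomials on $\T^\infty$, and then to all trigonometric polynomials. Every trigonometric polynomial on $\T^\infty$ (supported on finitely many coordinates) has a unique expansion $P=\sum c_{\alpha\beta}z^\alpha\bar z^\beta$, so I define
\[
  \Lambda(P)=\sum c_{\alpha\beta}\,B(n_\alpha^{-s},n_\beta^{-s})
  =\lim_{T\to\infty}\frac{1}{\lambda([0,T])}\int_{0}^{T} P(p_1^{-it},p_2^{-it},\dotsc)\,d\lambda(t),
\]
where $n_\alpha=p_1^{\alpha_1}\cdots p_d^{\alpha_d}$ and the last equality holds by linearity of the limit. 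Then $\Lambda(1)=1$, and whenever $P\ge 0$ pointwise on $\T^\infty$ the integrand $P(p^{-it})$ is nonnegative for every $t$, so $\Lambda(P)\ge 0$. The standard argument that a positive unital linear functional is bounded by the sup norm gives $|\Lambda(P)|\le \|P\|_{\infty,\T^\infty}$.

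Since trigonometric polynomials are dense in $C(\T^\infty)$ by the Stone-Weierstrass theorem, $\Lambda$ extends uniquely to a positive unital linear functional on $C(\T^\infty)$, and the Riesz representation theorem produces a unique Borel probability measure $\mu$ on $\T^\infty$ with $\Lambda(\phi)=\int_{\T^\infty}\phi\,d\mu$. By construction \eqref{bigequality} holds for every Dirichlet polynomial $f$, with $\phi=|F|^2$. To pass to arbitrary $f\in\cA(\cplus)$, I use Theorem \ref{aron} to approximate $f$ uniformly on $\cplus$ by Dirichlet polynomials $f_n$, whose Bohr lifts $F_n$ converge uniformly to $F$ on $\T^\infty$; the quantity $|f_n|^2$ converges uniformly to $|f|^2$ on the imaginary axis, so the averaged integrals converge uniformly in $T$, while $\int|F_n|^2\,d\mu\to\int|F|^2\,d\mu$ by uniform convergence on the probability space $\T^\infty$. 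Uniqueness of $\mu$ follows because any other measure $\mu'$ satisfying \eqref{bigequality} must give $\Lambda$ the same values on all products $z^\alpha\overline{z^\beta}$ (take $f$ with $F(z)=z^\alpha+z^\beta$ and use polarization), hence agrees on trigonometric polynomials, hence on all of $C(\T^\infty)$.

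The main obstacle I anticipate is the positivity step: one has to justify that a nonnegative trigonometric polynomial $P$ on $\T^\infty$, which in general does \emph{not} factor as $|F|^2$ for an analytic polynomial $F$, still has nonnegative functional value. The approach above sidesteps Fejér-Riesz-type decompositions by observing that $P(p^{-it})\ge 0$ holds pointwise on $\R$ directly, so the averages under $\lambda$ are nonnegative and the limit is obtained from the already-existing limits on monomials by linearity, without needing to realize $P$ as a positive combination of squared moduli of analytic polynomials.
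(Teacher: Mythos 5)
Your argument reaches the same conclusion as the paper's --- both define a positive linear functional on a dense subalgebra of $C(\T^\infty)$ via the averaged boundary integral, use Stone--Weierstrass for density, and invoke the Riesz representation theorem --- but you supply two ingredients the paper leaves implicit. First, you polarize to obtain $B(f,g)$ and then recast the functional as $\Lambda(P)=\lim_{T}\lambda([0,T])^{-1}\int_0^T P(p_1^{-it},p_2^{-it},\dots)\,d\lambda$ on trigonometric polynomials; second, you derive positivity (and hence the bound $|\Lambda(P)|\le\|P\|_\infty$) from the pointwise nonnegativity of $P(p_1^{-it},p_2^{-it},\dots)$ whenever $P\ge 0$ on $\T^\infty$. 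The paper's $\mathcal U=\{\sum a_n|F_n|^2\}$ with real $a_n$ is in fact the same space as your real trigonometric polynomials (the real and imaginary parts of $z^\alpha\bar z^\beta$ are real combinations of $|z^\alpha\pm z^\beta|^2$, $|z^\alpha\pm iz^\beta|^2$, and constants), so the two proofs diverge only in how positivity and boundedness of the functional are established --- and your pointwise observation is the cleanest route, neatly bypassing the Fej\'er--Riesz concern you raised, which the paper does not explicitly address. One small fix: the expansion $P=\sum c_{\alpha\beta}z^\alpha\bar z^\beta$ is not unique on $\T^\infty$ (since $z_j\bar z_j=1$), so you should not lead with a ``unique expansion''; instead note that well-definedness of $\Lambda$ follows directly because $\lim_{T}\lambda([0,T])^{-1}\int_0^T P(p_1^{-it},p_2^{-it},\dots)\,d\lambda$ depends only on $P$ as a function on $\T^\infty$, not on the chosen representation.
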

      \begin{proof}
        We will appeal to the Riesz Representation Theorem to show that the left hand side of (\ref{bigequality}) can be used to define a positive bounded linear functional $\phi$ on $C(\T^{\infty},\R)$ which will give us our unique $\mu$. This $\mu$ can then easily be shown to be a probability measure. We will define this functional $\phi$ on a subalgebra of $C(\T^{\infty},\R)$ and then apply Stone-Weierstrass to show that the definition extends to the whole of $C(\T^{\infty},\R)$.

        We choose to define $\phi$ on the set 
        \begin{equation*}
          \cU = \left\{\sum_{n=1}^N a_n |F_n|^2: F_n \text{ are polynomials on } {\T^{\infty}}\right\}.
        \end{equation*}
        Let
        \begin{equation} \label{blfphi}
          \phi: |F|^2 \mapsto \lim_{T \to \infty} \frac{1}{\lambda([0,T])} \int_{0}^{T}\left|f(it)\right|^2 d\lambda(t).
        \end{equation} 
        Extending this linearly gives the definition of $\phi$ on $\cU$, and if we can show that $\cU$ is dense in $C(X,\R)$, then we can extend $\phi$ to be a linear functional on $C(\T^{\infty},\R)$. 
        To show that $\cU$ is dense, apply Stone-Weierstrass: $\cU$ clearly contains the constant functions and is a vector subspace of $C(X,\R)$, and it is easy to show that it is closed under multiplication since $|F|^2|G|^2 = |FG|^2$ holds, and distribution shows that it holds for linear combinations as well. So we have that $\cU$ is a subalgebra. 

        It remains to show that $\cU$ separates points on $\T^{\infty}$: given $z\neq w\in \T^{\infty}$ we need to find a function in $\cU$, $h$ such that $h(z)\neq h(w)$. The points $z$ and $w$ must differ in at least one coordinate, so without loss of generality, assume they differ in the first one, $z_1 \neq w_1$. Consider the linear function in one variable $P(z) = az_1 + (b+ic)$, where $a,\,b,\,c \neq 0$. Given two points, the constants can be chosen such that $h(z)=|P(z)|^2$ separates those points. 

        So we have that $\cU$ is dense in $C(\T^{\infty}, \R)$, and $\phi$ as defined in (\ref{blfphi}) is clearly linear on $\cU$. Extend $\phi$ continuously to $C(\T^{\infty}, \R)$. Because $C(\T^{\infty}, \R)$ is a Banach space and because of the assumption that the limit exists and is bounded for all elements in $\cA(\cplus)$, $\phi$ is bounded on $C(\T^{\infty}, \R)$. Then Riesz Representation gives a unique measure $\mu$ on $\T^{\infty}$ such that
        \begin{equation*}
          \phi(h) = \int_{\T^{\infty}} h(z) d\mu.
        \end{equation*}
        This $\mu$ is a probability measure because $\lim_{T \to \infty} \frac{1}{\lambda([0,T])} \int_{0}^{T}\left|1\right|^2 d\lambda(t) = 1$, as needed.
      \end{proof}

  \begin{remark}[Lebesgue measure]
  \label{subthe_case_of_lebesgue_measure}
    If $\mu$ is Lebesgue measure on $\T^\infty$, $\int_{\T^{\infty}} \left| \sum a_n z^{n} \right|^2 d\mu(z)=\sum_{n=1}^\infty |a_n|^2$. In this case, it is possible to choose $\lambda$ to be Lebesgue measure, in addition to the measure constructed in the proof. Using the convergence in $\cA(\cplus)$ then, we can see that for a sequence of Dirichlet polynomials $f_m\to f\in \cA(\cplus)$ and corresponding $F_m \to F\in A(\D^{\N})$,
    \begin{align*}
      &\left|\frac{1}{T} \int_{0}^{T}\left|f(it)\right|^2 d\lambda(t) -\int_{\T^{\infty}} \left|F(z)\right|^2 d\mu(z)\right|\\
        &\quad\leq \left|\frac{1}{T} \int_{0}^{T}\left|f(it)\right|^2 d\lambda(t) -\frac{1}{T} \int_{0}^{T}\left|f_m(it)\right|^2 d\lambda(t)\right|
          +\left|\frac{1}{T} \int_{0}^{T}\left|f(it)\right|^2 d\lambda(t) -\int_{\T^{\infty}} \left|F(z)\right|^2 d\mu(z)\right|\\
          &\qquad\qquad+\left|\int_{0}^{T}\left|F_m(it)\right|^2 d\lambda(t) - \int_{0}^{T}\left|F(it)\right|^2 d\lambda(t)\right|\\
        &\quad<\epsilon
    \end{align*}
    where the first and third terms are small because of the convergence, and the middle term is small for large $T$ because Carlson's theorem holds on the boundary $\sigma=0$ for Dirichlet polynomials. This is not hard to see by simply computing the integrals on either side.

    This example lets us see explicitly that given $\mu$ we do not have uniqueness for $\lambda$.
  \end{remark} 

\section*{Acknowledgments}
  This paper is part of my PhD thesis under the advising of John McCarthy, and I would like to thank him for his suggestions and support.

\section*{References}
\bibliography{BibDirichletSeries.bib} 
\bibliographystyle{acm}
\end{document}